\documentclass[12pt,reqno]{amsart}
\numberwithin{equation}{section}
\usepackage[T1]{fontenc}
\usepackage{lmodern}
\usepackage{amsmath,amssymb,mathtools,mathrsfs}
\usepackage[margin=1.08in]{geometry}
\usepackage{microtype}
\usepackage{enumitem}
\usepackage{needspace}
\usepackage{float}
\usepackage{tikz-cd}
\usetikzlibrary{arrows}
\usepackage{xcolor}
\usepackage[colorlinks=true,linkcolor=blue!55!black,citecolor=blue!55!black,urlcolor=blue!55!black]{hyperref}
\hypersetup{pdftitle={Spectra and Support Theory for Wide Subcategories in n-Abelian Categories}}

\newtheorem{theorem}{Theorem}[section]
\newtheorem{proposition}[theorem]{Proposition}
\newtheorem{lemma}[theorem]{Lemma}
\newtheorem{corollary}[theorem]{Corollary}
\theoremstyle{definition}
\newtheorem{definition}[theorem]{Definition}

\theoremstyle{remark}
\newtheorem{remark}[theorem]{Remark}

\newcommand{\Hom}{\operatorname{Hom}}
\newcommand{\End}{\operatorname{End}}
\newcommand{\Ext}{\operatorname{Ext}}
\newcommand{\add}{\operatorname{add}}
\newcommand{\modcat}{\operatorname{mod}}
\newcommand{\Ob}{\operatorname{obj}}
\newcommand{\Spec}{\operatorname{Spec}}
\newcommand{\Supp}{\operatorname{Supp}}
\newcommand{\Wide}{\mathbf{Wide}}
\newcommand{\Rad}{\mathbf{Rad}}
\newcommand{\Param}{\mathbf{Param}}
\newcommand{\Prime}{\mathbf{Prime}}
\newcommand{\Spcl}{\mathbf{Spcl}}
\newcommand{\Ccl}{\mathscr{C}_{cl}}
\newcommand{\K}{\mathbf{k}}
\newcommand{\M}{\mathcal M}
\newcommand{\W}{\mathcal W}
\newcommand{\X}{\mathcal X}
\newcommand{\Pp}{\mathcal P}
\newcommand{\Qq}{\mathcal Q}
\DeclareMathOperator{\rad}{rad}
\DeclareMathOperator{\wide}{wide}

\title[Spectra and Support Theory for Wide Subcategories in \(n\)-Abelian Categories]{Spectra and Support Theory for Wide Subcategories in \(n\)-Abelian Categories}

\author{Zheng Xin}
\address{School of Mathematics and Statistics, Shaanxi Normal University, Xi'an 710062, China}
\email{xinzheng1314@yeah.net}

\date{}
\subjclass[2020]{18E10, 16G10, 18E40, 06B23}
\keywords{$n$-abelian category, wide subcategory, prime spectrum, support theory, radical}

\begin{document}
	
	\begin{abstract}
		We develop a support theory for wide subcategories of essentially small
		Krull--Schmidt \(n\)-abelian categories. We construct spectra associated
		with families of wide subcategories and establish the corresponding
		support and radical theories. In particular, we characterize prime wide
		subcategories as the completely meet-irreducible elements of the lattice
		of wide subcategories and prove that every wide subcategory is recovered
		from the prime wide subcategories containing it. We further show that any
		classifying support datum is recovered, together with its object supports,
		from the canonical prime spectrum. Finally, we provide a \(2\)-abelian
		example admitting no classifying support datum.
	\end{abstract}
	
	\maketitle
	\setcounter{tocdepth}{1}
	\tableofcontents
	
	\section{Introduction}
	
	Wide subcategories of abelian categories are full subcategories closed under
	kernels, cokernels, and extensions. Their classification connects homological
	closure conditions with the geometry of supports and the order structure of
	subcategory lattices. Support-theoretic classifications for module categories
	were obtained by Hovey for suitable commutative rings \cite{H01} and by
	Takahashi for commutative noetherian rings \cite[Theorem~A]{T08}.
	In representation theory, wide subcategories are also related to noncrossing
	partitions, torsion classes, and collections of bricks
	\cite{IT09,MS17,AP22,E21}.
	
	Balmer's tensor-ideal spectrum \cite{B05} and Matsui's spectra of prime
	thick subcategories \cite{M21} provide topological models for subcategory
	classification. Related constructions occur in extriangulated and tensor
	$n$-angulated categories \cite{TZH26,T26}. Building on the spectrum and
	support theory for wide subcategories of abelian categories developed
	in \cite{TWZ26}, we extend this framework to
	essentially small Krull--Schmidt $n$-abelian categories.
	
	The higher setting originates in Iyama's higher-dimensional
	Auslander--Reiten theory \cite{I07}, Jasso's $n$-abelian categories
	\cite{J16}, and the study of their wide subcategories by Herschend,
	J{\o}rgensen, and Vaso \cite{HJV20}. For $n\geq2$, $n$-kernels,
	$n$-cokernels, and representatives of $n$-extension classes need not
	be unique up to isomorphism, so the existence of suitable representatives
	in individual wide subcategories does not by itself establish closure
	under intersections. The minimal-representative criterion of
	\cite[Corollary~2.6]{HJ21} resolves this difficulty. 
	
	Let $\M$ be an essentially small Krull--Schmidt $n$-abelian category,
	and let $\Wide_n(\M)$ denote its poset of wide subcategories
	(Definition~\ref{def:wide}), ordered by inclusion.
	A proper wide subcategory is \emph{prime} if the wide subcategories
	strictly containing it have a least element
	\cite[Remark~2.9]{M23}.
	Write $\Prime_n(\M)$ for the set of primes and
	$\Spec_{\Prime_n(\M)}(\M)$ for this set with the topology
	whose closed sets are arbitrary intersections of the supports
	\[
	\Supp(M)=\{\Pp\in\Prime_n(\M)\mid M\notin\Pp\}.
	\]
	
	For a topological space $X$ and an assignment $\sigma$, write
	\[
	\sigma(\X)=\bigcup_{M\in\X}\sigma(M),
	\qquad
	\sigma^{-1}(V)=\{M\in\M\mid\sigma(M)\subseteq V\}.
	\]
	An $n$-support datum $(X,\sigma)$ is \emph{classifying} if
	$X$ is noetherian and sober and these maps give mutually inverse
	order isomorphisms between $\Wide_n(\M)$ and the
	specialization-closed subsets of $X$.
	
	\begin{theorem}\label{thm:intro-main}
	Let $\M$ be an essentially small Krull--Schmidt $n$-abelian category.
	\begin{enumerate}[label=\textup{(\arabic*)},leftmargin=*]
		\item Let $X$ be a topological space and let $\sigma$ be an
		isomorphism-invariant assignment of closed subsets of $X$
		to objects of $\M$. Then $(X,\sigma)$ is an $n$-support datum
		if and only if $\sigma^{-1}(\omega)$ is wide for every subset
		$\omega\subseteq X$.
		
		\item Every wide subcategory of $\M$ is the intersection of
		the prime wide subcategories containing it, with the empty
		intersection understood as $\M$.
		
		\item For every classifying $n$-support datum $(X,\sigma)$
		on $\M$, the map
		\[
		\phi\colon X\longrightarrow\Spec_{\Prime_n(\M)}(\M),
		\qquad
		\phi(x)=\{M\in\M\mid x\notin\sigma(M)\}
		\]
		is a homeomorphism satisfying
		$\phi^{-1}(\Supp(M))=\sigma(M)$ for every $M\in\M$.
	\end{enumerate}
	\end{theorem}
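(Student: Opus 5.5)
The plan is to prove the three parts in order, since (3) uses (2) and the characterization of primes, and (1) is what makes $\sigma^{-1}$ land in $\Wide_n(\M)$.

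For (1), I will unwind the axioms of an $n$-support datum, presumably of the form $\sigma(0)=\emptyset$, $\sigma(M\oplus N)=\sigma(M)\cup\sigma(N)$, and containments of $\sigma$ of the $n$-kernel, $n$-cokernel and $n$-extension terms in the union of $\sigma$ of the remaining terms of an $n$-exact sequence.

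\emph{Forward direction.} For fixed $\omega\subseteq X$, membership in $\sigma^{-1}(\omega)$ is tested objectwise. So each closure condition of Definition~\ref{def:wide} follows from the corresponding axiom: a term of an $n$-exact sequence has support inside the union of the others, hence inside $\omega$. Closure under direct summands comes from additivity. Because $n$-kernels and $n$-cokernels are only unique up to homotopy, I will use the minimal-representative criterion \cite[Corollary~2.6]{HJ21}. This lets me check the axioms on minimal representatives, whose terms are summands of the terms of any other representative.

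\emph{Converse.} I will take $\omega$ to be the union of $\sigma$ over the objects of the sequence other than the one being tested. Wideness of $\sigma^{-1}(\omega)$ forces the remaining term into it, which is exactly the required containment. The additivity axioms follow by taking $\omega=\emptyset$ and $\omega=\sigma(M)\cup\sigma(N)$.

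For (2), I will first show that $\Wide_n(\M)$ is a complete lattice whose meets are intersections, again using \cite[Corollary~2.6]{HJ21} to see that intersections of wide subcategories are wide. Given a wide $\W$ and an object $M\notin\W$, I apply Zorn's lemma to the set of wide subcategories containing $\W$ but not $M$. A union of a chain of such subcategories is wide, because each closure condition involves finitely many objects. So this set has a maximal element $\Pp$.

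Every wide subcategory strictly containing $\Pp$ contains $M$, and hence contains the wide closure $\wide(\Pp\cup\{M\})$. That closure is therefore the least element strictly above $\Pp$, so $\Pp$ is prime. This gives $\W=\bigcap\{\Pp\in\Prime_n(\M)\mid \W\subseteq\Pp\}$. The same argument shows that primes are exactly the completely meet-irreducible elements.

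For (3), I first use (1) to see that each $\phi(x)=\sigma^{-1}(X\setminus\overline{\{x\}})$ is wide; here I use that the $\sigma(M)$ are closed, so $x\notin\sigma(M)$ is equivalent to $\sigma(M)\subseteq X\setminus\overline{\{x\}}$. The set $X\setminus\overline{\{x\}}$ is specialization closed and is the largest such set not containing $x$. The specialization-closed subsets strictly containing it all contain $\overline{\{x\}}$, so they have a least element, namely $X\setminus\overline{\{x\}}\cup\overline{\{x\}}=X$ minus the points not specializing appropriately. Transporting along the classifying isomorphism shows that $\phi(x)$ is prime.

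\emph{Bijectivity of $\phi$.} Injectivity follows from sobriety, since $T_0$ plus the classification separates points by their closures. For surjectivity, a prime $\Pp$ corresponds to a specialization-closed set $V$ that is completely meet-irreducible among specialization-closed sets. I expect to show that such a $V$ has the form $X\setminus\overline{\{x\}}$. This is where noetherian plus sober is used: the complement $X\setminus V$ is a union of irreducible closed sets $\overline{\{y\}}$, and meet-irreducibility forces a single generic point.

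\emph{Topology.} The formula $\phi^{-1}(\Supp(M))=\{x\mid M\notin\phi(x)\}=\sigma(M)$ is immediate. Since closed sets of $\Spec_{\Prime_n(\M)}(\M)$ are intersections of the $\Supp(M)$, $\phi$ is continuous. To show that $\phi$ is closed, I need every closed subset of $X$ to be an intersection of sets $\sigma(M)$. For this I apply the classification to the specialization-closed complement $U$ of a closed set $Z$. Then $U=\sigma(\sigma^{-1}(U))$ is a union of the $\sigma(M)$, which expresses $Z$ suitably; in a noetherian space each closed set is a finite union of irreducibles, and I will handle each component $\overline{\{y\}}$ separately.

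I expect the main obstacle to be the surjectivity and closedness step in (3). There I must extract, from meet-irreducibility of a specialization-closed set in a noetherian sober space, a generic point, and then express arbitrary closed sets via supports.
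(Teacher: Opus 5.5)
Your arguments for (1) and (2) follow the paper: the minimal-representative criterion \cite[Corollary~2.6]{HJ21} for (1), and wide intersections, wide unions of chains, Zorn's lemma and $\wide_{\M}(\Pp\cup\{M\})$ as the cover for (2). One correction in (1): (NSD3)--(NSD5) bound the middle terms by the supports of the two fixed endpoints only. So the converse must use $\omega=\sigma(M)\cup\sigma(N)$ (resp.\ $\sigma(X_0)\cup\sigma(Y_0)$); if you put the other middle terms into $\omega$, you only get a weaker containment.

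The genuine gap is in (3). You work with $X\setminus\overline{\{x\}}$, but the relevant set is $V_x=\{y\in X\mid x\notin\overline{\{y\}}\}$, the complement of the set of \emph{generalizations} of $x$. The set $X\setminus\overline{\{x\}}$ is open, so it is closed under generalization, not specialization, and the classifying bijection does not apply to it. Also, closedness of $\sigma(M)$ only gives $x\in\sigma(M)\Rightarrow\overline{\{x\}}\subseteq\sigma(M)$, which does not yield your equivalence. Concretely, take $\M=\modcat\K[[t]]$ with the usual support on the Sierpi\'nski space $X=\Spec\K[[t]]=\{\eta,\mathfrak m\}$. This is a classifying $1$-support datum whose wide subcategories are $\mathbf0\subsetneq\mathcal F\subsetneq\M$, with $\mathcal F$ the finite-length modules. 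The closed set $\{\mathfrak m\}$ omits $\eta$ but is not contained in $X\setminus\overline{\{\eta\}}=\varnothing$. Your formula sends both points to $\sigma^{-1}(\varnothing)=\mathbf0$ and misses the prime $\mathcal F=\{M\mid\eta\notin\sigma(M)\}$.

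What you need is $x\notin\sigma(M)\iff\sigma(M)\subseteq V_x$, together with Lemma~\ref{lem:spcl1-points}. In a $T_0$ space, $V_x$ is specialization-closed, and the least specialization-closed set properly containing it is $V_x\cup\{x\}$. Conversely, every $V\in\Spcl_1(X)$ equals $V_x$ for any $x$ in its cover minus $V$. That surjectivity is elementary. Your plan to decompose $X\setminus V$ into irreducible closed sets $\overline{\{y\}}$ cannot work, since $X\setminus V$ is generalization-closed and need not be a union of point closures.

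The closedness step has a second gap. The complement of a closed set is open, not specialization-closed, so the classification cannot be applied to it. Even if you wrote it as a union of supports, that would present the closed set as an intersection of complements of supports, not of supports. The missing idea is that every closed $C\subseteq X$ is the support of a single object:
\begin{itemize}
\item Noetherianity and sobriety give $C=\overline{\{x_1\}}\cup\cdots\cup\overline{\{x_r\}}$.
\item Each $\overline{\{x_i\}}$ is specialization-closed, so the classification gives $\overline{\{x_i\}}=\bigcup_{M\in\sigma^{-1}(\overline{\{x_i\}})}\sigma(M)$. Hence some $M_i$ satisfies $x_i\in\sigma(M_i)\subseteq\overline{\{x_i\}}$.
\item Closedness of $\sigma(M_i)$ then forces $\sigma(M_i)=\overline{\{x_i\}}$, so $C=\sigma(M_1\oplus\cdots\oplus M_r)$.
\item Bijectivity of $\phi$ and $\phi^{-1}(\Supp(M))=\sigma(M)$ give $\phi(C)=\Supp(M_1\oplus\cdots\oplus M_r)$, which is closed.
\end{itemize}
Your remark about handling each component $\overline{\{y\}}$ separately points this way. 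However, this realization step is the missing ingredient, and it is where noetherianity and sobriety are actually used; the point bijection needs only $T_0$.
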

	For $n=1$, our constructions recover the abelian framework
	of \cite{TWZ26}, with the least-element convention for primes;
	the Krull--Schmidt hypothesis is unnecessary in this case.
	For $n=2$, we revisit the cluster-tilting subcategory over
	$\K(1\to2\to3)/\langle\beta\alpha\rangle$
	from \cite[Section~7]{HJV20} and \cite[Section~7.1]{F26},
	and compute its prime spectrum and supports.
	Its eight wide subcategories have distinct supports, whereas
	its prime spectrum has twenty-five specialization-closed subsets.
	By Theorem~\ref{thm:intro-main}\textup{(3)}, no classifying
	support datum exists.

	\noindent\textbf{Conventions and notation.}
	Throughout, $n\geq1$ and $\K$ is a field.
	All subcategories are full and closed under isomorphisms.
	We write $\add\mathcal S$ for the closure of $\mathcal S$
	under finite direct sums, direct summands, and isomorphisms.
	The specialization order is
	$x\leq_{\mathrm{sp}}y$ if $x\in\overline{\{y\}}$.
	
	\section*{Acknowledgments}
	I am grateful to my supervisor, Prof. Wen Chang,
	for his academic guidance and support.

	\section{Wide subcategories of \texorpdfstring{$n$}{n}-abelian categories}\label{sec:prelim}
	
	We recall higher kernels, cokernels, and extensions, and then use minimal
	representatives to prove that intersections of wide subcategories are wide.
	The definitions and the minimal-representative criterion are taken from
	\cite{J16,HJV20,HJ21}.
	
	Let $\mathcal C$ be an additive category and let $f:X^0\to X^1$ be a morphism. An \emph{$n$-cokernel} of $f$ is a sequence
	\[
	X^1\xrightarrow{d^1}X^2\longrightarrow\cdots\longrightarrow X^{n}\xrightarrow{d^n}X^{n+1}
	\]
	such that, for every $Y\in\mathcal C$, the induced sequence
	\[
	0\to\mathcal C(X^{n+1},Y)\to\mathcal C(X^n,Y)\to\cdots\to
	\mathcal C(X^1,Y)\to\mathcal C(X^0,Y)
	\]
	is exact. The notion of an \emph{$n$-kernel} is defined dually.
	
	A complex
	$X^0\longrightarrow X^1\longrightarrow\cdots\longrightarrow X^n\longrightarrow X^{n+1}$
	is \emph{$n$-exact} if its first $n$ morphisms form an $n$-kernel of the last morphism and its last $n$ morphisms form an $n$-cokernel of the first morphism. Such a complex is also called an \emph{$n$-extension} of $X^{n+1}$ by $X^0$.
	
	An \emph{$n$-abelian category} is an idempotent-complete additive
	category in which every morphism has an $n$-kernel and an
	$n$-cokernel, subject to the following two axioms: a monomorphism
	together with any of its $n$-cokernels is $n$-exact, and an
	epimorphism together with any of its $n$-kernels is $n$-exact, see \cite[Definition~3.1]{J16}. For $n=1$, this is the usual
	notion of an abelian category.
	
	Two $n$-extensions with the same endpoints are \emph{equivalent}
	if there is a chain map between them that is the identity at both
	endpoints. This is an equivalence relation in an $n$-abelian
	category, see \cite[Proposition~4.10]{J16} and
	\cite[Definition~2.2]{HJ21}.
	
	\begin{definition}\cite[Definition~2.3]{HJ21}\label{def:wide}
		Let $\M$ be an $n$-abelian category. A full additive subcategory
		$\W\subseteq\M$ is \emph{wide} if it is closed under finite direct sums
		and direct summands and satisfies the following conditions.
		\begin{enumerate}[label=\textup{(W\arabic*)},leftmargin=*]
			\item Every morphism in $\W$ admits an $n$-kernel and an $n$-cokernel
			in $\M$ all of whose terms belong to $\W$. \label{W1}
			\item Every $n$-extension in $\M$ with end terms in $\W$ is equivalent
			to an $n$-extension whose middle terms all belong to $\W$. \label{W2}
		\end{enumerate}
		We write $\Wide_n(\M)$ for the poset of wide subcategories ordered by inclusion.
	\end{definition}
	
	A category is \emph{Krull--Schmidt} if every object is a finite
	direct sum of objects with local endomorphism rings. Assume for the
	rest of this section that $\M$ is Krull--Schmidt. Its categorical
	Jacobson radical is
	\[
	\rad_{\M}(U,V)=
	\{a:U\to V\mid 1_U-ba\text{ is invertible for every }b:V\to U\}.
	\]
	This ideal of morphisms is distinct from the radical of a wide
	subcategory introduced in Section~\ref{sec:radical}.
	
	\begin{definition}\cite[Definition~2.5]{HJ21}\label{def:minimal}
		Let $f:M\to N$ be a morphism in $\M$.
		\begin{enumerate}[label=\textup{(\arabic*)},leftmargin=*]
			\item An $n$-kernel $K_n\to\cdots\to K_1\to M$ of $f$ is
			\emph{minimal} if the maps $K_i\to K_{i-1}$ belong to
			$\rad_{\M}$ for $2\leq i\leq n$.
			\item An $n$-cokernel $N\to C_1\to\cdots\to C_n$ of $f$ is
			\emph{minimal} if the maps $C_i\to C_{i+1}$ belong to
			$\rad_{\M}$ for $1\leq i<n$.
			\item An $n$-extension $X\to E_n\to\cdots\to E_1\to Y$ is
			\emph{minimal} if its maps $E_i\to E_{i-1}$ belong to
			$\rad_{\M}$ for $2\leq i\leq n$.
		\end{enumerate}
	\end{definition}
	
	The maps incident with the fixed endpoints are not required to be
	radical. For $n=1$, all three radical conditions are empty.
	
	\begin{lemma}\cite[Proposition~2.4 and Definition~2.5]{HJ21}\label{lem:additive-minimal-representatives}
		In a Krull--Schmidt \(n\)-abelian category, the following properties hold:
		\begin{enumerate}
			\item Every morphism has a minimal \(n\)-kernel and a minimal
			\(n\)-cokernel.
			
			\item Every \(n\)-extension class has a minimal representative.
			
			\item Each minimal representative is a direct summand, as a complex,
			of every corresponding representative and is unique up to isomorphism.
			Moreover, the comparison morphisms and isomorphisms may be chosen to
			be the identity on the prescribed endpoints.
		\end{enumerate}
	\end{lemma}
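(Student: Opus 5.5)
The lemma is cited from \cite{HJ21}, so the plan is to reconstruct the standard argument in a Krull--Schmidt category. The one general tool is this: a complex in a Krull--Schmidt category that is homotopy equivalent (rel endpoints) to another splits as a direct sum of a radical complex and contractible pieces of the form $E\xrightarrow{1}E$. I will produce minimal representatives by peeling off such trivial summands, and then prove uniqueness by a radical argument.

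\emph{Existence.} Start with an arbitrary $n$-kernel $K_n\to\cdots\to K_1\to M$ of $f$, which exists since $\M$ is $n$-abelian. Suppose some map $d_i\colon K_i\to K_{i-1}$ with $2\le i\le n$ is not radical. Decomposing into indecomposables and using the Krull--Schmidt property, some component $K_i\supseteq E\to E'\subseteq K_{i-1}$ between indecomposable summands is an isomorphism. After a change of basis we may take $E=E'$, so the complex contains a direct summand $0\to E\xrightarrow{1}E\to0$ sitting in degrees $i,i-1$.

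Remove this summand. The remaining complex is still an $n$-kernel, because the $\Hom(Y,-)$ exactness conditions are unchanged by adding or removing a contractible summand. The endpoint $M$ is not touched, since $i\ge2$ means the summand avoids degree $0$. The total number of indecomposable summands strictly drops, so the procedure terminates, and it ends at a minimal $n$-kernel. The same argument handles $n$-cokernels. For $n$-extensions it works in the middle degrees $E_i$ with $2\le i\le n$; removing a contractible summand changes the complex by a homotopy equivalence that is the identity on $X$ and $Y$, so the class in the equivalence relation is preserved.

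\emph{Summand and uniqueness.} Let $P$ be minimal and $Q$ any representative of the same class. First produce comparison maps $u\colon P\to Q$ and $v\colon Q\to P$ that are the identity on the endpoints. For $n$-kernels and $n$-cokernels these come from the universal property, lifting degreewise. For extensions they come from the equivalence relation; since it is an equivalence relation \cite[Proposition~4.10]{J16}, chain maps exist in both directions. Then $vu\colon P\to P$ is the identity on the endpoints, and it is homotopic to $1_P$ by a homotopy vanishing at the endpoints, using the comparison lemma for $n$-kernels and $n$-cokernels. So $1-vu=hd+dh$ in each middle degree. Every differential among the middle terms of $P$ is radical, and the differentials at the ends are handled because the homotopy vanishes at the endpoints. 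Hence $1-vu$ is radical in each middle degree, so $vu$ is an isomorphism. Therefore $P$ is a direct summand of $Q$ via $u$ and $(vu)^{-1}v$, and these maps are the identity on the endpoints. If $Q$ is also minimal, the same argument applied symmetrically makes $uv$ invertible, so $P\cong Q$.

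The main obstacle is the homotopy step in the extension case. One must check that the homotopy $h$ can be chosen to vanish on the fixed endpoints, and that the terms $hd+dh$ involving the non-radical end maps $X\to E_n$ and $E_1\to Y$ still land in the radical. I would first try to handle this by using the $n$-kernel property of $E_n\to\cdots\to Y$ to build $h$ inductively from the right, with $h=0$ at $Y$. The symmetric left end would be handled by the same inductive construction starting from $X$.
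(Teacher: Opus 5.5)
\textbf{Overall assessment.} Your route is the paper's route. The paper defers existence, and the fact that the composite of the two comparison maps is a degreewise automorphism of a minimal representative, to \cite[Proposition~2.4]{HJ21}. It then uses the same adjusted retraction $(vu)^{-1}v$ that you use. Your existence step is fine: splitting off $E\xrightarrow{\sim}E$ is legitimate because $d^2=0$ kills the neighbouring components. Your uniqueness argument for $n$-kernels and $n$-cokernels is also fine, since there only one end map is non-radical and the homotopy can be taken to be zero at $M$ (resp.\ $N$).

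\textbf{Where the gap is.} The gap is in the extension case, exactly at the step you flagged, and your proposed fix does not close it. Running the inductive construction once from $Y$ and once from $X$ produces two different homotopies, not one homotopy vanishing at both ends. Concretely, write the extension as $X=P^0\xrightarrow{d^0}P^1\to\cdots\to P^n\xrightarrow{d^n}P^{n+1}=Y$ and put $\varphi=1_P-vu$. The construction from the right, using the $n$-kernel property, gives $h^{n+1}=0$ and $\varphi^k=d^{k-1}h^k+h^{k+1}d^k$ for $1\le k\le n$. In degree $1$, however, the term $d^0h^1$ is not a priori radical, because $d^0$ is not. The degree-$0$ relation only tells you that $h^1d^0=\varphi^0=0$.

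\textbf{The missing step for $n\ge2$.} Use the other half of $n$-exactness at that point. Since $(d^1,\dots,d^n)$ is an $n$-cokernel of $d^0$ and $h^1d^0=0$, for $n\ge2$ you get $h^1=gd^1$ for some $g\colon P^2\to X$. Hence $d^0h^1=d^0gd^1\in\rad_{\M}$, because $d^1$ is a radical middle map. Equivalently, you may replace $(h^1,h^2)$ by $(0,\,h^2+d^0g)$; this is still a homotopy because $d^1d^0=0$, and it now vanishes at both ends. Then every $\varphi^k$ with $1\le k\le n$ is radical, and $vu$ is a degreewise automorphism.

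\textbf{The case $n=1$.} This repair needs $n\ge2$, and for $n=1$ your claim is actually false. Take the split sequence $X\to X\oplus Y\to Y$ and the chain endomorphism $vu=\bigl(\begin{smallmatrix}1&g\\0&1\end{smallmatrix}\bigr)$ with $0\neq g\colon Y\to X$. It is the identity on both ends, yet a homotopy vanishing at both ends would force $\varphi^1=0$. For $X=Y$ indecomposable and $g=1_X$, the map $1-vu$ is not even radical. The conclusion still holds: $\varphi^1=d^0h^1$ with $h^1d^0=0$ squares to zero, or you can invoke the short five lemma. This is why the paper treats $n=1$ separately via the short five lemma, and you need to do the same.
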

	
	The proof of \cite[Proposition~2.4]{HJ21} does not require
	$\K$-linearity or Hom-finiteness, so it applies in the present generality.
	For $n=1$, the fixed-endpoint assertions follow from uniqueness of
	kernels and cokernels and the short five lemma. For $n\geq2$,
	the comparison maps in that proof fix the prescribed endpoints. If $a$ and $b$
	are the comparison maps in opposite directions, their composite $ba$
	on a minimal representative is a degreewise automorphism. Replacing
	$b$ by $(ba)^{-1}b$ gives a retraction still fixing the endpoints.
	When both representatives are minimal, the same argument applies to
	$ab$, and $a$ is an isomorphism fixing the endpoints.
	
	The direct-summand property in Lemma~\ref{lem:additive-minimal-representatives}
	gives the following characterization of wide subcategories.
	
	\begin{lemma}\cite[Corollary~2.6]{HJ21}\label{lem:minimal-characterization}
		Let $\M$ be a Krull--Schmidt $n$-abelian category and let $\W\subseteq\M$ be a subcategory closed under finite direct sums and direct summands.  Then $\W$ is wide if and only if:
		\begin{enumerate}[label=\textup{(\arabic*)},leftmargin=*]
			\item for every morphism $f:M\to N$ in $\W$, all terms of a (hence any) minimal $n$-kernel and minimal $n$-cokernel of $f$ belong to $\W$;
			\item for every $X,Y\in\W$, all middle terms of every minimal $n$-extension of $Y$ by $X$ belong to $\W$.
		\end{enumerate}
	\end{lemma}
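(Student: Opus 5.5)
We prove the two implications separately, using the direct-summand property of Lemma~\ref{lem:additive-minimal-representatives}(3) throughout.

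\emph{Wide implies (1) and (2).} Suppose $\W$ is wide and let $f\colon M\to N$ be a morphism in $\W$. By \ref{W1}, $f$ has an $n$-kernel in $\M$ with all terms in $\W$. By Lemma~\ref{lem:additive-minimal-representatives}(3), a minimal $n$-kernel of $f$ is a direct summand, as a complex, of this $n$-kernel. Hence each term of the minimal $n$-kernel is a direct summand of an object of $\W$, and so lies in $\W$, since $\W$ is closed under summands. Because minimal representatives are unique up to isomorphism and $\W$ is closed under isomorphisms, this holds for any minimal $n$-kernel, which justifies the phrase ``(hence any)''. The same argument applies to minimal $n$-cokernels.

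For (2), let $X,Y\in\W$ and take a minimal $n$-extension $\xi$ of $Y$ by $X$. By \ref{W2}, $\xi$ is equivalent to an $n$-extension $\eta$ whose middle terms lie in $\W$. The minimal representative of this class is a summand of $\eta$, by Lemma~\ref{lem:additive-minimal-representatives}(3). It is also isomorphic to $\xi$, since both are minimal representatives of the same class. Therefore the middle terms of $\xi$ lie in $\W$.

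\emph{(1) and (2) imply wide.} Conversely, assume (1) and (2). Every morphism in $\W$ has a minimal $n$-kernel and a minimal $n$-cokernel by Lemma~\ref{lem:additive-minimal-representatives}(1). By (1), all their terms lie in $\W$, which gives \ref{W1}. For \ref{W2}, let $\eta$ be an $n$-extension in $\M$ with end terms in $\W$. By Lemma~\ref{lem:additive-minimal-representatives}(2), its class has a minimal representative $\xi$, and by (2) the middle terms of $\xi$ lie in $\W$. It remains to check that $\xi$ is equivalent to $\eta$ in the sense of a chain map that is the identity on both endpoints. This is exactly the fixed-endpoint assertion of Lemma~\ref{lem:additive-minimal-representatives}(3): the split inclusion of $\xi$ into $\eta$ can be chosen to be the identity on the endpoints.

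\emph{Main obstacle.} The only delicate point is the one just used: the summand decomposition and the comparison maps must respect the prescribed endpoints. Otherwise ``equivalent'' and ``summand'' would not interact correctly. Lemma~\ref{lem:additive-minimal-representatives}(3) states exactly this, so we invoke it directly.
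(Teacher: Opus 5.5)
Your proof is correct and takes essentially the paper's route: the paper presents this characterization as a direct consequence of the direct-summand and uniqueness properties of minimal representatives in Lemma~\ref{lem:additive-minimal-representatives}, applied to the representatives supplied by (W1) and (W2) in one direction and to the existence of minimal representatives in the other, exactly as you do. One small point: in the converse, $\xi$ is equivalent to $\eta$ simply because it is a representative of the class of $\eta$, so the appeal to the fixed-endpoint assertion is harmless but not needed.
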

	
	Lemma~\ref{lem:minimal-characterization} replaces the existence conditions in the definition of a wide
	subcategory by closure under minimal representatives. This permits us to take arbitrary intersections.
	
	\begin{proposition}\label{prop:intersection}
		Let $\M$ be a Krull--Schmidt $n$-abelian category and let
		$\{\W_\lambda\}_{\lambda\in\Lambda}$ be a family of wide subcategories.
		Then $\bigcap_{\lambda\in\Lambda}\W_\lambda$ is wide. The empty intersection is understood to be $\M$.
	\end{proposition}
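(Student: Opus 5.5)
The plan is to verify the hypotheses of Lemma~\ref{lem:minimal-characterization} for $\W:=\bigcap_{\lambda\in\Lambda}\W_\lambda$, which reduces wideness to closure conditions that are obviously stable under intersection. First we dispose of the empty family. If $\Lambda=\emptyset$, then $\W=\M$ by convention. $\M$ is trivially wide: it is closed under sums and summands, and every $n$-kernel, $n$-cokernel and $n$-extension representative lies in $\M$. Such representatives exist because $\M$ is $n$-abelian.

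Assume now $\Lambda\neq\emptyset$. We first check that $\W$ is a full additive subcategory closed under isomorphisms, finite direct sums and direct summands. Each property holds in every $\W_\lambda$, so it passes to the intersection. For instance, if $X\oplus X'\in\W$, then $X\in\W_\lambda$ for all $\lambda$, hence $X\in\W$. Zero lies in every $\W_\lambda$ since each is additive.

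Next we verify condition (1) of Lemma~\ref{lem:minimal-characterization}. Let $f:M\to N$ be a morphism with $M,N\in\W$. By Lemma~\ref{lem:additive-minimal-representatives}(1), fix a minimal $n$-kernel and a minimal $n$-cokernel of $f$. For each $\lambda$, the morphism $f$ lies in $\W_\lambda$, and $\W_\lambda$ is wide. The "only if" direction of Lemma~\ref{lem:minimal-characterization}, which is stated for a (hence any) minimal representative, then shows that all terms of our fixed minimal representatives lie in $\W_\lambda$. Hence they lie in $\W$.

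Condition (2) is handled the same way. Let $X,Y\in\W$ and take any minimal $n$-extension of $Y$ by $X$. For every $\lambda$, its middle terms lie in $\W_\lambda$ by Lemma~\ref{lem:minimal-characterization} applied to $\W_\lambda$. So they lie in $\W$, and the "if" direction gives that $\W$ is wide.

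The only delicate point is the independence of the choice of minimal representative. A single fixed representative must serve all $\lambda$ simultaneously. This is exactly why we use the formulation "a (hence any)" and "every minimal $n$-extension" in Lemma~\ref{lem:minimal-characterization}. That independence rests on the uniqueness up to isomorphism of minimal representatives in Lemma~\ref{lem:additive-minimal-representatives}(3), together with closure of each $\W_\lambda$ under isomorphisms.

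By contrast, the original conditions \ref{W1} and \ref{W2} only assert existence of some representative in each $\W_\lambda$. These representatives could differ for different $\lambda$, which is why we avoid working with \ref{W1} and \ref{W2} directly.
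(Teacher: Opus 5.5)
Your proposal is correct and follows essentially the same route as the paper. You check closure under finite sums and summands, then apply Lemma~\ref{lem:minimal-characterization} in both directions, to a fixed minimal $n$-kernel and $n$-cokernel and to an arbitrary minimal $n$-extension. You also treat the empty family separately as $\M$, and your observation that one minimal representative serves every $\W_\lambda$ at once is the same point the paper makes in its remark after the proof.
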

	
	\begin{proof}
		Set $\W=\bigcap_{\lambda\in\Lambda}\W_\lambda$. Then $\W$ is full and
		closed under finite direct sums and direct summands. Let $f:M\to N$ be
		a morphism in $\W$, and choose a minimal $n$-kernel and a minimal $n$-cokernel of
		$f$ in $\M$. Since $M,N\in\W_\lambda$ for all $\lambda\in\Lambda$,
		Lemma~\ref{lem:minimal-characterization} shows that every term of these
		minimal complexes lies in each $\W_\lambda$, and hence in $\W$.
		
		Let $X,Y\in\W$, and let an arbitrary minimal $n$-extension of $Y$ by $X$ be given.
		Applying Lemma~\ref{lem:minimal-characterization} once again, we see
		that all middle terms of this extension lie in each $\W_\lambda$, and
		therefore in $\W$. Hence $\W$ satisfies the criterion of
		Lemma~\ref{lem:minimal-characterization}, and so $\W$ is wide.
		
		If $\Lambda=\varnothing$, then $\W=\M$, which is wide.
	\end{proof}
	\begin{remark}
		Intersection closure is not immediate from the definition when \(n\ge2\),
		since the higher kernels, cokernels, and extension representatives need not
		be unique up to isomorphism. Minimal representatives provide a common
		representative in every member of the family and hence establish this closure.
	\end{remark}
	\begin{corollary}\label{cor:complete-lattice}
		Let $\M$ be an essentially small Krull--Schmidt $n$-abelian category.
		Then $\Wide_n(\M)$ is a complete lattice. For every family
		$\{\W_\lambda\}$ of wide subcategories,
		\[
		\bigwedge_\lambda\W_\lambda
		=
		\bigcap_\lambda\W_\lambda,
		\qquad
		\bigvee_\lambda\W_\lambda
		=
		\wide_{\M}\!\left(\bigcup_\lambda\W_\lambda\right),
		\]
		where $\wide_{\M}(\mathcal S)$ denotes the least wide
		subcategory of $\M$ containing $\mathcal S$.
	\end{corollary}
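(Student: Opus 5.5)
The plan is to deduce the corollary from Proposition~\ref{prop:intersection} by the standard argument that a poset with arbitrary meets is a complete lattice. The one point needing care is set-theoretic: we must know that $\Wide_n(\M)$ is a set, so that intersections over arbitrary subfamilies, and over the family of all wide subcategories containing a given class, make sense.

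First I will address smallness. Since $\M$ is essentially small and all subcategories are full and closed under isomorphisms, a wide subcategory is determined by the set of isomorphism classes of objects it contains. Hence $\Wide_n(\M)$ injects into the power set of the set of isomorphism classes of $\M$, so it is a set and the partial order by inclusion is a poset.

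Next come meets. For a family $\{\W_\lambda\}_{\lambda\in\Lambda}$, Proposition~\ref{prop:intersection} shows that $\bigcap_\lambda\W_\lambda$ is wide, with the empty intersection equal to $\M$. It is contained in each $\W_\lambda$, and any wide $\W'$ contained in every $\W_\lambda$ is contained in the intersection. So the intersection is the greatest lower bound, and arbitrary meets exist, including the top element $\M$.

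For joins, given any class $\mathcal S$ of objects, I define $\wide_{\M}(\mathcal S)$ as the intersection of all wide subcategories containing $\mathcal S$. This family is a set by the first step and is nonempty since it contains $\M$. By Proposition~\ref{prop:intersection} this intersection is wide, and it is clearly the least wide subcategory containing $\mathcal S$. Taking $\mathcal S=\bigcup_\lambda\W_\lambda$, a wide subcategory contains every $\W_\lambda$ if and only if it contains $\mathcal S$. Hence $\wide_{\M}(\mathcal S)$ is the least upper bound. For $\Lambda=\varnothing$ this gives $\wide_{\M}(\varnothing)$, the bottom element, which is the zero subcategory.

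There is no real obstacle here. The only subtle point is the smallness argument, which is exactly where the hypothesis that $\M$ is essentially small is used.
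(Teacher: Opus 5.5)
Your proposal is correct and follows essentially the same route as the paper. You use essential smallness to make $\Wide_n(\M)$ a set, take arbitrary meets as intersections via Proposition~\ref{prop:intersection}, and obtain joins as the intersection of the nonempty family of wide subcategories containing $\bigcup_\lambda\W_\lambda$; your explicit injection into the power set of isomorphism classes and your identification of the bottom element with the zero subcategory are correct elaborations that the paper leaves implicit.
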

	
	\begin{proof}
		Since $\M$ is essentially small, $\Wide_n(\M)$ is a set.
		Proposition~\ref{prop:intersection} shows that arbitrary meets exist.
		For a class $\mathcal S$ of objects of $\M$, the collection of wide
		subcategories containing $\mathcal S$ is nonempty, as it contains
		$\M$. By Proposition~\ref{prop:intersection}, their intersection is
		wide and is the least wide subcategory containing $\mathcal S$.
		Taking $\mathcal S=\bigcup_\lambda\W_\lambda$ gives the formula for
		joins.
	\end{proof}
	
	We will also need closure under directed unions to separate a wide
	subcategory from an object outside it. Recall that a family
	\(\{\W_i\}_{i\in I}\) is directed under inclusion if for any
	\(i,j\in I\) there exists \(k\in I\) such that
	$\W_i,\W_j\subseteq\W_k$.

	\begin{lemma}\label{lem:directed-unions}
		The union of a nonempty family of wide subcategories that is
		directed under inclusion is wide.
	\end{lemma}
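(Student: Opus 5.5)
Set $\W=\bigcup_{i\in I}\W_i$ for a nonempty directed family of wide subcategories. The plan is to verify the criterion of Lemma~\ref{lem:minimal-characterization}, which is available because $\M$ is assumed Krull--Schmidt throughout this section. The common thread is that each closure condition involves only finitely many objects of $\W$. Directedness places these objects in a single $\W_k$, and the minimal-representative criterion then transfers the conclusion from $\W_k$ to $\W$ without any choice of representative.

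\emph{Step 1: additivity and closure under sums and summands.} $\W$ is nonempty, since $I\neq\varnothing$, and it is closed under isomorphisms because each $\W_i$ is. If $M\in\W_i$ and $N\in\W_j$, choose $k$ with $\W_i,\W_j\subseteq\W_k$. Then $M\oplus N\in\W_k\subseteq\W$. Any direct summand of an object of $\W_i$ lies in $\W_i$. Fullness is automatic, since all subcategories are full.

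\emph{Step 2: the kernel and cokernel condition.} Let $f:M\to N$ be a morphism with $M,N\in\W$. By directedness, $M,N\in\W_k$ for some $k$, so $f$ is a morphism in $\W_k$. Lemma~\ref{lem:minimal-characterization} applied to the wide subcategory $\W_k$ shows that every term of a minimal $n$-kernel and of a minimal $n$-cokernel of $f$ lies in $\W_k\subseteq\W$. This is exactly condition (1) of Lemma~\ref{lem:minimal-characterization} for $\W$.

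\emph{Step 3: the extension condition.} Let $X,Y\in\W$, and take any minimal $n$-extension of $Y$ by $X$. As in Step 2, choose $k$ with $X,Y\in\W_k$. Condition (2) for $\W_k$ puts all middle terms of this extension in $\W_k\subseteq\W$. Lemma~\ref{lem:minimal-characterization} then shows that $\W$ is wide.

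The only delicate point is that the original conditions \ref{W1} and \ref{W2} require the existence of suitable representatives. Verifying them directly would require checking that a representative found inside $\W_k$ serves $\W$ as well. This is in fact immediate, because $\W_k\subseteq\W$, so even the direct route works. The minimal-representative formulation simply makes the "hence any" bookkeeping uniform, so I expect no genuine obstacle.
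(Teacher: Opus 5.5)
Your proof is correct and uses the same idea as the paper's. Directedness puts the finitely many objects involved (the source and target of a morphism, or the two end terms of an extension) into a single $\W_k$, and the closure properties of $\W_k$ then pass to $\W$ because $\W_k\subseteq\W$. The only difference is that your main argument checks the minimal-representative criterion of Lemma~\ref{lem:minimal-characterization}, whereas the paper checks \ref{W1} and \ref{W2} directly, exactly as in your closing remark. The direct route has the small advantage of not needing the Krull--Schmidt hypothesis.
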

	
	\begin{proof}
		Let $\W=\bigcup_{i\in I}\W_i$ be such a union, viewed as a full
		subcategory. It contains zero and is closed under isomorphisms and
		direct summands. Any two objects lie in one $\W_i$, so their
		direct sum also lies in $\W$. If $f:X\to Y$ is a morphism in
		$\W$, choose $i$ with $X,Y\in\W_i$. An $n$-kernel and an
		$n$-cokernel supplied by $\W_i$ have all their terms in $\W$.
		Likewise, an $n$-extension with endpoints in $\W$ has both
		endpoints in some $\W_i$, where it has an equivalent representative
		with all middle terms in $\W_i$. Thus $\W$ is wide.
	\end{proof}
	
	\section{Spectrum and support theory}\label{sec:spectrum}
	
	From now on, $\M$ is an essentially small Krull--Schmidt $n$-abelian category.
	Fix an arbitrary subset $\mathscr W\subseteq\Wide_n(\M)$.
	No closure property of $\mathscr W$ is assumed. 
	For a class $\mathcal S$ of objects of $\M$, define
	$\mathbf{Z}(\mathcal S)=\{\Pp\in\mathscr W\mid \Pp\cap\mathcal S=\varnothing\}$.
	For classes $\mathcal S_1$ and $\mathcal S_2$, set
	$\mathcal S_1\oplus\mathcal S_2
	=\{M_1\oplus M_2\mid M_i\in\mathcal S_i\}$.
	
	\begin{proposition}\label{prop:topology}
		The subsets $\mathbf{Z}(\mathcal S)$ are the closed sets of a topology on $\mathscr W$.
	\end{proposition}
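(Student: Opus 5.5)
We verify the three closed-set axioms directly from the definition $\mathbf{Z}(\mathcal S)=\{\Pp\in\mathscr W\mid \Pp\cap\mathcal S=\varnothing\}$. First, the empty set and the whole space are closed. Taking $\mathcal S=\varnothing$ gives $\mathbf{Z}(\varnothing)=\mathscr W$. Taking $\mathcal S=\{0\}$ gives $\mathbf{Z}(\{0\})=\varnothing$, since every wide subcategory is additive and closed under isomorphisms, hence contains the zero object. Equivalently one may take $\mathcal S$ to be the class of all objects of $\M$, using that every $\Pp$ is nonempty.

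Second, arbitrary intersections of closed sets are closed. For any family $\{\mathcal S_i\}_{i\in I}$ we have
\[
\bigcap_{i\in I}\mathbf{Z}(\mathcal S_i)=\mathbf{Z}\Bigl(\bigcup_{i\in I}\mathcal S_i\Bigr),
\]
because $\Pp$ misses every $\mathcal S_i$ exactly when it misses their union. The empty family is covered by the case $\mathcal S=\varnothing$ above. Since $\M$ is essentially small, we may work with classes of objects without set-theoretic trouble.

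Third, finite unions of closed sets are closed, and this is the only step that uses the structure of wide subcategories; here the notation $\mathcal S_1\oplus\mathcal S_2$ enters. The claim is
\[
\mathbf{Z}(\mathcal S_1)\cup\mathbf{Z}(\mathcal S_2)=\mathbf{Z}(\mathcal S_1\oplus\mathcal S_2).
\]
\begin{itemize}
\item[$\subseteq$:] Suppose $\Pp$ misses $\mathcal S_1$, and suppose $M_1\oplus M_2\in\Pp$ with $M_i\in\mathcal S_i$. Closure of $\Pp$ under direct summands (Definition~\ref{def:wide}) gives $M_1\in\Pp$, a contradiction. The case where $\Pp$ misses $\mathcal S_2$ is symmetric.
\item[$\supseteq$:] Suppose $\Pp$ meets both $\mathcal S_1$ and $\mathcal S_2$, say $M_i\in\Pp\cap\mathcal S_i$. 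Closure under finite direct sums gives $M_1\oplus M_2\in\Pp\cap(\mathcal S_1\oplus\mathcal S_2)$, so $\Pp\notin\mathbf{Z}(\mathcal S_1\oplus\mathcal S_2)$.
\end{itemize}
Induction then handles all finite unions. The degenerate case where some $\mathcal S_i$ is empty is consistent: then $\mathcal S_1\oplus\mathcal S_2=\varnothing$, and both sides equal $\mathscr W$.

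No step is a real obstacle. The only point needing care is the finite-union identity, which relies precisely on closure of wide subcategories under direct sums and summands. Notably, no minimality or Krull--Schmidt argument is needed at this stage.
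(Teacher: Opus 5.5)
Your proof is correct and follows the paper's argument essentially step for step: arbitrary intersections via $\mathbf{Z}(\bigcup_\lambda\mathcal S_\lambda)$, binary unions via $\mathbf{Z}(\mathcal S_1\oplus\mathcal S_2)$ using closure under finite direct sums and direct summands, and the trivial closed sets via $\mathbf{Z}(\varnothing)=\mathscr W$ and the zero object. The only cosmetic difference is that you obtain $\varnothing$ as $\mathbf{Z}(\{0\})$ rather than $\mathbf{Z}(\Ob\M)$, which is equally valid.
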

	
	\begin{proof}
		For any family $\{\mathcal S_\lambda\}$ of classes of objects of $\M$, we have
		\[
		\bigcap_\lambda \mathbf{Z}(\mathcal S_\lambda)
		=
		\mathbf{Z}\!\left(\bigcup_\lambda\mathcal S_\lambda\right).
		\]
		For the union of two closed sets, let $\Pp\in\mathscr W$. Since
		$\Pp$ is wide, it is closed under finite direct sums and direct
		summands. Hence
		$\Pp\cap(\mathcal S_1\oplus\mathcal S_2)\neq\varnothing$
		if and only if $\Pp\cap\mathcal S_1\neq\varnothing$ and
		$\Pp\cap\mathcal S_2\neq\varnothing$.
		Indeed, if $M_1\oplus M_2\in\Pp$, then closure under direct
		summands gives $M_1,M_2\in\Pp$, while the converse follows from
		closure under finite direct sums.
		Taking complements in $\mathscr W$ yields
		$\mathbf{Z}(\mathcal S_1)\cup \mathbf{Z}(\mathcal S_2)
		=
		\mathbf{Z}(\mathcal S_1\oplus\mathcal S_2)$.
		Finally,
		$\mathbf{Z}(\varnothing)=\mathscr W$, while
		$\mathbf{Z}(\Ob\M)=\varnothing$, since every wide subcategory
		contains the zero object.
	\end{proof}
	
	\begin{definition}
		The resulting topological space is denoted by
		$\Spec_{\mathscr W}(\M)$ and is called the \emph{spectrum of $\M$
			with respect to $\mathscr W$}. For an object $M\in\M$, define its
		\emph{support} by
		$\Supp(M)=\mathbf{Z}(\{M\})=\{\Pp\in\mathscr W\mid M\notin\Pp\}$,
		and set
		$\mathbf{U}(M)=\{\Pp\in\mathscr W\mid M\in\Pp\}$.
	\end{definition}
	
	Every closed subset of $\Spec_{\mathscr W}(\M)$ is of the form
	\begin{equation}\label{eq:closed-intersection-support}
		\mathbf{Z}(\mathcal S)=\bigcap_{M\in\mathcal S}\Supp(M),
	\end{equation}
	for some class $\mathcal S$ of objects of $\M$. The subsets $\mathbf{U}(M)$ form
	an open basis, since
	\[
	\mathbf{U}(M)\cap \mathbf{U}(N)=\mathbf{U}(M\oplus N).
	\]
	Moreover, the support satisfies
	$\Supp(0)=\varnothing,$ and 
	$\Supp(M\oplus N)=\Supp(M)\cup\Supp(N)$.

	\begin{proposition}\label{prop:point-closure}
		For $\Pp\in\Spec_{\mathscr W}(\M)$,
		\[
		\overline{\{\Pp\}}
		=\{\Qq\in\mathscr W\mid \Qq\subseteq\Pp\}.
		\]
		In particular, $\Spec_{\mathscr W}(\M)$ is a $T_0$-space.
	\end{proposition}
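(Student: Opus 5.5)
The plan is to compute the closure directly from the description of closed sets in \eqref{eq:closed-intersection-support}. The closure of $\{\Pp\}$ is the intersection of all closed sets $\mathbf{Z}(\mathcal S)$ containing $\Pp$. The condition $\Pp\in\mathbf{Z}(\mathcal S)$ means exactly that $\mathcal S\cap\Pp=\varnothing$. Since $\mathbf{Z}$ is order-reversing, the largest such class, $\mathcal S_0=\Ob\M\setminus\Pp$, gives the smallest closed set containing $\Pp$. Equivalently, $\mathbf{Z}(\mathcal S_0)=\bigcap_{\mathcal S\cap\Pp=\varnothing}\mathbf{Z}(\mathcal S)$, using $\bigcap_\lambda\mathbf{Z}(\mathcal S_\lambda)=\mathbf{Z}(\bigcup_\lambda\mathcal S_\lambda)$ from the proof of Proposition~\ref{prop:topology}. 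Hence
\[
\overline{\{\Pp\}}=\mathbf{Z}(\Ob\M\setminus\Pp)=\{\Qq\in\mathscr W\mid \Qq\cap(\Ob\M\setminus\Pp)=\varnothing\}=\{\Qq\in\mathscr W\mid\Qq\subseteq\Pp\}.
\]
Here we use that all subcategories are full and closed under isomorphisms, so a subcategory is determined by its class of objects and the complement is isomorphism-closed. No wideness is needed beyond this.

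For the $T_0$ property, let $\Pp\neq\Qq$ be points of $\mathscr W$ and suppose $\overline{\{\Pp\}}=\overline{\{\Qq\}}$. By the formula, $\Qq\in\overline{\{\Pp\}}$ gives $\Qq\subseteq\Pp$, and symmetrically $\Pp\subseteq\Qq$. Hence $\Pp=\Qq$, a contradiction. So distinct points have distinct closures, which is the $T_0$ condition.

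There is no real obstacle here. The only point needing care is justifying that the complement class is an admissible $\mathcal S$, which holds because $\mathbf Z$ is defined for arbitrary classes of objects.
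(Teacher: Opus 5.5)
Your proposal is correct and follows essentially the same route as the paper: both take the complement class $\Ob\M\setminus\Ob\Pp$, observe that any $\mathcal S$ with $\Pp\in\mathbf{Z}(\mathcal S)$ is contained in it (so, $\mathbf{Z}$ being order-reversing, its zero set is the smallest closed set containing $\Pp$), and deduce $T_0$ from mutual inclusion. The extra remark via $\bigcap_\lambda\mathbf{Z}(\mathcal S_\lambda)=\mathbf{Z}(\bigcup_\lambda\mathcal S_\lambda)$ is a harmless reformulation of the same step.
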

	
	\begin{proof}
		Let $\mathcal S_{\Pp}=\Ob\M\setminus\Ob\Pp$. By definition,
		$\mathbf{Z}(\mathcal S_{\Pp})=\{\Qq\in\mathscr W\mid \Qq\subseteq\Pp\}$,
		which is a closed subset containing $\Pp$.
		Now let $\mathbf{Z}(\mathcal S)$ be a closed subset containing $\Pp$. Then
		$\Pp\cap\mathcal S=\varnothing$, so $\mathcal S\subseteq\mathcal S_{\Pp}$.
		Hence $\mathbf{Z}(\mathcal S_{\Pp})\subseteq \mathbf{Z}(\mathcal S)$. Therefore
		$\mathbf{Z}(\mathcal S_{\Pp})$ is the smallest closed subset containing $\Pp$,
		and thus equals $\overline{\{\Pp\}}$.
		Finally, if two points have the same closure, then the above description
		implies that the corresponding wide subcategories contain each other.
		Hence they are equal, and $\Spec_{\mathscr W}(\M)$ is a $T_0$-space.
	\end{proof}
	
	Thus the specialization order on $\Spec_{\mathscr W}(\M)$ is
	the inclusion order of its wide subcategories.
	For the canonical support, define
	$\Supp^{-1}(\omega)=\{M\in\M\mid\Supp(M)\subseteq\omega\}$.
	
	\begin{lemma}\label{lem:canonical-inverse}
		For every subset $\omega\subseteq\Spec_{\mathscr W}(\M)$,
		\begin{equation}\label{eq:inverse-intersection}
			\Supp^{-1}(\omega)
			=\bigcap_{\Pp\notin\omega}\Pp.
		\end{equation}
		In particular, $\Supp^{-1}(\omega)$ is wide.
	\end{lemma}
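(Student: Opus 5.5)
The plan is to prove the set-theoretic identity \eqref{eq:inverse-intersection} by unwinding the definitions, and then to read off wideness from Proposition~\ref{prop:intersection}. Here $\Pp$ ranges over $\mathscr W\setminus\omega$, which is the complement of $\omega$ inside the spectrum.

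\emph{The identity.} Fix an object $M\in\M$. By definition of the support, $\Supp(M)\subseteq\omega$ holds exactly when every $\Pp\in\mathscr W$ with $M\notin\Pp$ lies in $\omega$. Taking the contrapositive, this says that for every $\Pp\in\mathscr W\setminus\omega$ we have $M\in\Pp$. That is precisely the condition $M\in\bigcap_{\Pp\notin\omega}\Pp$. Both sides are classes of objects of $\M$ and are closed under isomorphism, since each $\Pp$ is. Both are also regarded as full subcategories. Hence the identity holds as an equality of full subcategories.

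\emph{Wideness.} The right-hand side is an intersection of the family $\{\Pp\}_{\Pp\in\mathscr W\setminus\omega}$ of wide subcategories. By Proposition~\ref{prop:intersection}, which uses the Krull--Schmidt hypothesis standing from Section~\ref{sec:spectrum} on, this intersection is wide.

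\emph{Degenerate case.} If $\omega\supseteq\mathscr W$, the family is empty. The intersection is then understood as $\M$, which agrees with the left-hand side because every support is contained in $\mathscr W\subseteq\omega$.

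There is no real obstacle here. The only point needing care is the empty-family convention. The substantive input, closure of wide subcategories under intersection for $n\ge2$, is already supplied by Proposition~\ref{prop:intersection}.
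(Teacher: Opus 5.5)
Your proposal is correct and matches the paper's proof: it unwinds $\Supp(M)\subseteq\omega$ to ``$M\in\Pp$ for every $\Pp\notin\omega$'' and then obtains wideness from Proposition~\ref{prop:intersection}. Like the paper, it treats $\omega$ equal to the whole spectrum separately, using the convention that the empty intersection is $\M$.
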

	
	\begin{proof}
		For $M\in\M$, we have
		\[
		\begin{aligned}
			M\in\Supp^{-1}(\omega)
			&\Longleftrightarrow \Supp(M)\subseteq\omega\\
			&\Longleftrightarrow \Pp\notin\Supp(M)\text{ for all }\Pp\notin\omega\\
			&\Longleftrightarrow M\in\Pp\text{ for all }\Pp\notin\omega,
			\qquad\text{by }\Supp(M)=\{\Pp\mid M\notin\Pp\}.
		\end{aligned}
		\]
		This proves \eqref{eq:inverse-intersection}. Since every $\Pp$ is a
		wide subcategory, Proposition~\ref{prop:intersection} implies that
		$\Supp^{-1}(\omega)$ is wide. If $\omega=\Spec_{\mathscr W}(\M)$,
		the intersection on the right is empty and hence equals $\M$, which is
		wide.
	\end{proof}
	
	For a subcategory $\X\subseteq\M$ define $\Supp(\X)=\bigcup_{M\in\X}\Supp(M)$. Then
	\begin{equation}\label{eq:subcategory-support}
		\Supp(\X)
		=\{\Pp\in\mathscr W\mid \X\nsubseteq\Pp\}.
	\end{equation}
	
	\begin{lemma}\label{prop:subcat-spcl}
		For every subcategory $\X\subseteq\M$, the subset $\Supp(\X)$ is
		specialization-closed.
	\end{lemma}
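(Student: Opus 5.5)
We need to show that if $\Pp\in\Supp(\X)$ and $\Qq\in\overline{\{\Pp\}}$, then $\Qq\in\Supp(\X)$. Here specialization-closed means closed under passing to points in the closure. With the convention $x\leq_{\mathrm{sp}}y$ if $x\in\overline{\{y\}}$, this says the subset is closed downward in $\leq_{\mathrm{sp}}$.

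The plan is to combine the description \eqref{eq:subcategory-support} with Proposition~\ref{prop:point-closure}. By Proposition~\ref{prop:point-closure}, the condition $\Qq\in\overline{\{\Pp\}}$ means exactly $\Qq\subseteq\Pp$. By \eqref{eq:subcategory-support}, the condition $\Pp\in\Supp(\X)$ means $\X\nsubseteq\Pp$. If $\X\subseteq\Qq$ held, then $\X\subseteq\Qq\subseteq\Pp$, which is a contradiction. Hence $\X\nsubseteq\Qq$, that is, $\Qq\in\Supp(\X)$.

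Equivalently, one can argue pointwise without \eqref{eq:subcategory-support}. Each $\Supp(M)=\mathbf{Z}(\{M\})$ is closed, hence specialization-closed. An arbitrary union of specialization-closed sets is again specialization-closed. It follows that $\Supp(\X)=\bigcup_{M\in\X}\Supp(M)$ is specialization-closed.

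I would also verify \eqref{eq:subcategory-support} itself in one line. We have $\Pp\in\Supp(\X)$ if and only if there is some $M\in\X$ with $M\notin\Pp$, which holds if and only if $\X\nsubseteq\Pp$.

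There is no real obstacle. The only point needing care is the direction of the specialization order. Closed sets contain the closures of their points, so we must go down to smaller wide subcategories, and this is consistent with Proposition~\ref{prop:point-closure}.
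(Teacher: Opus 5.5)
Your proof is correct and is essentially the paper's own argument: Proposition~\ref{prop:point-closure} turns $\Qq\in\overline{\{\Pp\}}$ into $\Qq\subseteq\Pp$, and then an object of $\X$ lying outside $\Pp$ also lies outside $\Qq$. Your alternative route is also valid and slightly more economical, since it avoids the point-closure description: each $\Supp(M)$ is closed, and a union of specialization-closed sets is specialization-closed.
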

	
	\begin{proof}
		Let $\Pp\in\Supp(\X)$ and let
		$\Qq\in\overline{\{\Pp\}}$. By
		Proposition~\ref{prop:point-closure}, we have $\Qq\subseteq\Pp$.
		Since $\Pp\in\Supp(\X)$, there exists some $M\in\X$ with
		$M\notin\Pp$. Hence $M\notin\Qq$, and therefore
		$\Qq\in\Supp(\X)$.
	\end{proof}
	
	For a subset $\omega\subseteq\Spec_{\mathscr W}(\M)$, its closure is
	given by
	\[
	\overline{\omega}
	=\bigcap_{M\in\M,\,\omega\subseteq\Supp(M)}\Supp(M).
	\]
	Indeed, every closed subset is an intersection of supports of objects of
	$\M$ by \eqref{eq:closed-intersection-support}.
	
	The inverse-support formula \eqref{eq:inverse-intersection} expresses
	support containment as an intersection of wide subcategories. We next
	characterize the assignments for which every support-containment
	subcategory is wide.
	Let $X$ be a topological space and let $\Ccl(X)$ denote the collection
	of closed subsets of $X$.
	
	\begin{definition}\label{def:n-support-datum}
		An \emph{$n$-support datum} on $\M$ is a pair $(X,\sigma)$, where
		\[
		\sigma:\Ob\M\longrightarrow\Ccl(X)
		\]
		is constant on isomorphism classes and satisfies the following conditions.
		\begin{enumerate}[label=\textup{(NSD\arabic*)},leftmargin=*]
			\item $\sigma(0)=\varnothing$. \label{NSD1}
			\item $\sigma(M\oplus N)=\sigma(M)\cup\sigma(N)$.\label{NSD2}
			\item If $K_n\to\cdots\to K_1\to M$
			is a minimal $n$-kernel of a morphism $f:M\to N$, then
			\[
			\sigma(K_i)\subseteq\sigma(M)\cup\sigma(N),
			\quad 1\leq i\leq n.
			\]\label{NSD3}
			\item If $N\to C_1\to\cdots\to C_n$ is a minimal $n$-cokernel of $f:M\to N$, then
			\[
			\sigma(C_i)\subseteq\sigma(M)\cup\sigma(N),
			\quad 1\leq i\leq n.
			\]\label{NSD4}
			\item If $X_0\to E_n\to\cdots\to E_1\to Y_0$ is a minimal $n$-extension of $Y_0$ by $X_0$, then
			\[
			\sigma(E_i)\subseteq\sigma(X_0)\cup\sigma(Y_0),
			\quad 1\leq i\leq n.
			\]\label{NSD5}
		\end{enumerate}
	\end{definition}
	
	\begin{remark}
		For $n\geq2$, the minimality assumptions in
		Definition~\ref{def:n-support-datum} cannot be omitted. Indeed, for any
		object $C$, the sequence
		$0\longrightarrow C\xrightarrow{1_C}C$
		is a $2$-cokernel of the zero morphism $0\to0$, see
		\cite[Remark~2.3]{J16}. Taking $\mathscr W=\{\mathbf0\}$ and
		$C\neq0$ would then give the false inclusion
		$\Supp(C)\subseteq\Supp(0)$. For larger $n$, the same example can be padded with zero terms.
		Thus conditions imposed on all representatives would generally fail;
		minimal representatives remove this obstruction.
	\end{remark}
	
	For a subset $\omega\subseteq X$, define the subcategory
	\[
	\sigma^{-1}(\omega)
	=\{M\in\M\mid\sigma(M)\subseteq\omega\}.
	\]
	The support axioms first give a sufficient condition for these
	subcategories to be wide.
	
	\begin{lemma}\label{lem:support-preimages-wide}
		If $(X,\sigma)$ is an $n$-support datum, then
		$\sigma^{-1}(\omega)$ is wide for every subset $\omega\subseteq X$.
	\end{lemma}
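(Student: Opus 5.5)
We verify the criterion of Lemma~\ref{lem:minimal-characterization} for $\W=\sigma^{-1}(\omega)$, after first checking that $\W$ is a legitimate subcategory to which that lemma applies. Since $\sigma$ is constant on isomorphism classes, $\W$ is closed under isomorphisms. By \ref{NSD1}, $\sigma(0)=\varnothing\subseteq\omega$, so $0\in\W$. By \ref{NSD2}, $\sigma(M\oplus N)=\sigma(M)\cup\sigma(N)$. Hence $M\oplus N\in\W$ if and only if both $M,N\in\W$. This gives closure under finite direct sums, and also under direct summands: if $M\oplus N\in\W$ then $\sigma(M)\subseteq\sigma(M\oplus N)\subseteq\omega$. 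So $\W$ is a full additive subcategory closed under finite direct sums and direct summands, and Lemma~\ref{lem:minimal-characterization} is available.

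For condition (1) of that lemma, let $f\colon M\to N$ be a morphism with $M,N\in\W$. Choose a minimal $n$-kernel $K_n\to\cdots\to K_1\to M$ of $f$, which exists by Lemma~\ref{lem:additive-minimal-representatives}. Then \ref{NSD3} gives $\sigma(K_i)\subseteq\sigma(M)\cup\sigma(N)\subseteq\omega$, so every $K_i\in\W$. The minimal $n$-cokernel is handled identically using \ref{NSD4}. Since Lemma~\ref{lem:minimal-characterization} says ``a (hence any)'' minimal representative, the uniqueness in Lemma~\ref{lem:additive-minimal-representatives}(3) means checking one choice suffices; in any case the axioms apply to every minimal representative.

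For condition (2), let $X,Y\in\W$ and take an arbitrary minimal $n$-extension $X\to E_n\to\cdots\to E_1\to Y$. By \ref{NSD5}, $\sigma(E_i)\subseteq\sigma(X)\cup\sigma(Y)\subseteq\omega$, so all middle terms lie in $\W$. Lemma~\ref{lem:minimal-characterization} then yields that $\W$ is wide.

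The only point requiring care is the passage from minimal representatives to the existence conditions (W1) and (W2). That passage is exactly the content of Lemma~\ref{lem:minimal-characterization}, which rests on the direct-summand property of minimal representatives. So no real obstacle remains beyond the closure bookkeeping in the first step. One should also note that the case $\omega=X$ gives $\W=\M$, consistent with the argument.
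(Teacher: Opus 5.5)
Your proposal is correct and follows the paper's proof essentially step for step. You obtain zero, finite direct sums, direct summands and isomorphism closure from \ref{NSD1}, \ref{NSD2} and isomorphism invariance, use \ref{NSD3}--\ref{NSD5} to place the terms of minimal $n$-kernels and $n$-cokernels and the middle terms of minimal $n$-extensions in $\sigma^{-1}(\omega)$, and then conclude with Lemma~\ref{lem:minimal-characterization}.
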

	
	\begin{proof}
		Fix $\omega\subseteq X$. By \ref{NSD1} and \ref{NSD2}, the subcategory
		$\sigma^{-1}(\omega)$ contains the zero object and is closed under
		finite direct sums. It is closed under isomorphisms by assumption.
		If $M\oplus N\in\sigma^{-1}(\omega)$, then
		\[
		\sigma(M)\cup\sigma(N)=\sigma(M\oplus N)\subseteq\omega,
		\]
		so both summands belong to $\sigma^{-1}(\omega)$.
		
		For a morphism $f:M\to N$ in this subcategory, the union
		$\sigma(M)\cup\sigma(N)$ is contained in $\omega$.
		Conditions \ref{NSD3} and \ref{NSD4} therefore put every term of
		its minimal $n$-kernel and minimal $n$-cokernel in
		$\sigma^{-1}(\omega)$. If the endpoints of a minimal $n$-extension
		belong to this subcategory, condition \ref{NSD5} puts all its middle
		terms there as well. Lemma~\ref{lem:minimal-characterization} now
		shows that $\sigma^{-1}(\omega)$ is wide.
	\end{proof}
	
	Conversely, requiring all support-containment subcategories to be wide
	recovers each support axiom. In particular, the subsets used to test this
	condition need not be closed or specialization-closed.
	
	\begin{theorem}\label{prop:support-characterization}
		Let \(X\) be a topological space and let
		$\sigma:\Ob\M\longrightarrow\Ccl(X)$
		be an assignment that is constant on isomorphism classes.
		Then the following conditions are equivalent:
		\begin{enumerate}
			\item \((X,\sigma)\) is an \(n\)-support datum.
			
			\item For every subset \(\omega\subseteq X\), the full subcategory
			\[
			\sigma^{-1}(\omega)
			=
			\{M\in\mathcal M\mid \sigma(M)\subseteq\omega\}
			\]
			is wide.
		\end{enumerate}
	\end{theorem}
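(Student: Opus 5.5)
The implication (1)$\Rightarrow$(2) is exactly Lemma~\ref{lem:support-preimages-wide}, so it remains to prove (2)$\Rightarrow$(1). Throughout, assume every $\sigma^{-1}(\omega)$ is wide. The idea is to recover each axiom by testing wideness on a well-chosen subset $\omega$, namely the union of the supports of the relevant endpoints.

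First I would derive \ref{NSD1} and \ref{NSD2}. Take $\omega=\varnothing$. The subcategory $\sigma^{-1}(\varnothing)$ is wide, so it contains the zero object, and hence $\sigma(0)\subseteq\varnothing$.

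For \ref{NSD2}, take $\omega=\sigma(M)\cup\sigma(N)$. Then $M,N\in\sigma^{-1}(\omega)$, and closure under finite direct sums gives $\sigma(M\oplus N)\subseteq\sigma(M)\cup\sigma(N)$.

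For the reverse inclusion, take $\omega=\sigma(M\oplus N)$. Then $M\oplus N\in\sigma^{-1}(\omega)$, and closure under direct summands gives $\sigma(M),\sigma(N)\subseteq\sigma(M\oplus N)$.

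Next come \ref{NSD3} and \ref{NSD4}. Let $f\colon M\to N$ be a morphism, set $\omega=\sigma(M)\cup\sigma(N)$ and $\W=\sigma^{-1}(\omega)$. Both $M$ and $N$ lie in $\W$, so $f$ is a morphism in $\W$. Since $\W$ is wide and closed under sums and summands, Lemma~\ref{lem:minimal-characterization}(1) applies. It says that all terms of a (hence any) minimal $n$-kernel and minimal $n$-cokernel of $f$ lie in $\W$.

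The ``hence any'' clause matters here. Axioms \ref{NSD3} and \ref{NSD4} are stated for an arbitrary given minimal representative. By Lemma~\ref{lem:additive-minimal-representatives}(3), minimal representatives are unique up to isomorphism, and $\sigma$ is constant on isomorphism classes. So every minimal representative satisfies $\sigma(K_i)\subseteq\omega$ and $\sigma(C_i)\subseteq\omega$, as required.

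\ref{NSD5} is analogous. For a minimal $n$-extension of $Y_0$ by $X_0$, take $\omega=\sigma(X_0)\cup\sigma(Y_0)$. Then Lemma~\ref{lem:minimal-characterization}(2) places every middle term $E_i$ in $\sigma^{-1}(\omega)$; that statement already quantifies over every minimal $n$-extension.

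I expect no serious obstacle. The only care needed is in the ``only if'' direction of Lemma~\ref{lem:minimal-characterization}, namely that a wide subcategory contains the terms of minimal representatives and not merely of some representative. That is the content of the cited corollary, which rests on the direct-summand property of Lemma~\ref{lem:additive-minimal-representatives}(3). Concretely, a representative inside $\W$ contains the minimal one as a summand complex, and $\W$ is closed under summands.
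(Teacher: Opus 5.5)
Your proposal is correct and matches the paper's proof essentially step for step. It uses the same test sets $\omega=\varnothing$, $\omega=\sigma(M)\cup\sigma(N)$, $\omega=\sigma(M\oplus N)$ and $\omega=\sigma(X_0)\cup\sigma(Y_0)$, together with Lemma~\ref{lem:minimal-characterization}; your added remark on uniqueness of minimal representatives up to isomorphism only spells out the ``hence any'' clause that the paper leaves implicit.
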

	
	\begin{proof}
		The forward implication is Lemma~\ref{lem:support-preimages-wide}.
		For the converse, assume that $\sigma^{-1}(\omega)$ is wide for
		every $\omega\subseteq X$.
		
		Taking $\omega=\varnothing$, we obtain
		$0\in\sigma^{-1}(\varnothing)$, and hence
		$\sigma(0)=\varnothing$. This proves \ref{NSD1}.
		
		For $M,N\in\M$, take $\omega=\sigma(M)\cup\sigma(N)$.
		The objects $M$ and $N$ belong to the wide subcategory
		$\sigma^{-1}(\omega)$, so their direct sum belongs to it. Thus
		\[
		\sigma(M\oplus N)\subseteq\sigma(M)\cup\sigma(N).
		\]
		Taking instead $\omega=\sigma(M\oplus N)$ and using closure under
		direct summands gives the reverse inclusion. This proves \ref{NSD2}.
		
		Let $f:M\to N$ be a morphism and again put
		$\omega=\sigma(M)\cup\sigma(N)$. The morphism lies in the full
		wide subcategory $\sigma^{-1}(\omega)$. By
		Lemma~\ref{lem:minimal-characterization}, every term of its minimal
		$n$-kernel and minimal $n$-cokernel belongs to
		$\sigma^{-1}(\omega)$. These are exactly the inclusions in
		\ref{NSD3} and \ref{NSD4}.
		
		Finally, for endpoints $X_0,Y_0\in\M$, put
		$\omega=\sigma(X_0)\cup\sigma(Y_0)$. Applying
		Lemma~\ref{lem:minimal-characterization} to the wide subcategory
		$\sigma^{-1}(\omega)$ puts every middle term of every minimal
		$n$-extension of $Y_0$ by $X_0$ in this subcategory. This is
		\ref{NSD5}, completing the converse.
	\end{proof}
	
	For the canonical assignment, Lemma~\ref{lem:canonical-inverse}
	already supplies the wide subcategories required by
	Theorem~\ref{prop:support-characterization}.
	
	\begin{corollary}\label{cor:canonical-support}
		For every family $\mathscr W\subseteq\Wide_n(\M)$, the assignment
		\[
		\Supp:\Ob\M\longrightarrow\Ccl(\Spec_{\mathscr W}(\M))
		\]
		defines an $n$-support datum.
	\end{corollary}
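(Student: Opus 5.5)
The plan is to apply Theorem~\ref{prop:support-characterization} to the pair $(X,\sigma)=(\Spec_{\mathscr W}(\M),\Supp)$. Two things must be checked.

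First, the statement must make sense: $\Supp$ has to take values in $\Ccl(\Spec_{\mathscr W}(\M))$ and be constant on isomorphism classes. By definition $\Supp(M)=\mathbf{Z}(\{M\})$. By Proposition~\ref{prop:topology} this is a closed subset of $\Spec_{\mathscr W}(\M)$. Since every member of $\mathscr W$ is closed under isomorphisms, $M\in\Pp$ if and only if $M'\in\Pp$ whenever $M\cong M'$. Hence $\Supp(M)=\Supp(M')$, so $\Supp$ is a well-defined assignment of the required kind.

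Second, we verify condition~(2) of Theorem~\ref{prop:support-characterization}. Let $\omega\subseteq\Spec_{\mathscr W}(\M)$ be an arbitrary subset. Lemma~\ref{lem:canonical-inverse} gives
\[
\Supp^{-1}(\omega)=\bigcap_{\Pp\notin\omega}\Pp .
\]
This is an intersection of wide subcategories. When $\omega$ is the whole spectrum, the intersection is empty and is understood as $\M$. In either case Proposition~\ref{prop:intersection} shows that $\Supp^{-1}(\omega)$ is wide. The implication (2)$\Rightarrow$(1) of Theorem~\ref{prop:support-characterization} then shows that $(\Spec_{\mathscr W}(\M),\Supp)$ is an $n$-support datum.

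There is no real obstacle here, since the substantive work was done earlier. The minimal-representative criterion (Lemma~\ref{lem:minimal-characterization}) gives closure of wide subcategories under arbitrary intersections (Proposition~\ref{prop:intersection}). It is also what converts wideness of the preimages $\Supp^{-1}(\omega)$ into the axioms \ref{NSD3}--\ref{NSD5}. The only point to watch is that no closure property of $\mathscr W$ is needed: the argument uses only that each individual $\Pp\in\mathscr W$ is wide.
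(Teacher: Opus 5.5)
Your proposal is correct and follows the paper's proof essentially step for step. Like the paper, you check that $\Supp(M)=\mathbf{Z}(\{M\})$ is closed and constant on isomorphism classes, obtain wideness of every $\Supp^{-1}(\omega)$ from Lemma~\ref{lem:canonical-inverse} together with Proposition~\ref{prop:intersection}, and conclude by the implication (2)$\Rightarrow$(1) of Theorem~\ref{prop:support-characterization}.
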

	
	\begin{proof}
		The assignment is constant on isomorphism classes because each member
		of $\mathscr W$ is closed under isomorphisms. Each $\Supp(M)$ is
		closed because $\Supp(M)=\mathbf Z(\{M\})$. By
		Lemma~\ref{lem:canonical-inverse}, $\Supp^{-1}(\omega)$ is wide for
		every subset $\omega$ of the spectrum. The conclusion follows from
		Theorem~\ref{prop:support-characterization}.
	\end{proof}
	
	\begin{remark}
		For $n=1$, the equivalence in
		Theorem~\ref{prop:support-characterization} holds in any
		essentially small abelian category, without the Krull--Schmidt
		hypothesis.
	\end{remark}
	
	\section{Radical wide subcategories}\label{sec:radical}
	
	The support correspondence determines a closure operator on
	$\Wide_n(\M)$. We describe its fixed points and distinguish the
	subsets realized by supports from all specialization-closed subsets.
	Keep the family $\mathscr W\subseteq\Wide_n(\M)$ fixed.
	
	\begin{definition}\label{def:radical}
		For $\X\in\Wide_n(\M)$, its $\mathscr W$-\emph{radical} is
		\begin{equation}\label{eq:radical}
			\sqrt[\mathscr W]{\X}
			=\bigcap_{\Pp\in\mathscr W,~~\X\subseteq\Pp}\Pp.
		\end{equation}
		The empty intersection is $\M$. We call $\X$
		$\mathscr W$-\emph{radical} if $\sqrt[\mathscr W]{\X}=\X$, and
		write $\Rad_{\mathscr W}(\M)$ for the poset of such subcategories.
	\end{definition}
	
	The subcategory in \eqref{eq:radical} is wide by Proposition~\ref{prop:intersection}.
	
	\begin{remark}\label{rem:two-radicals}
		The notation $\rad_{\M}(U,V)$ denotes a group of morphisms in the
		categorical Jacobson radical. By contrast,
		$\sqrt[\mathscr W]{\X}$ is a subcategory obtained by intersecting
		members of the chosen family $\mathscr W$. The former is used to
		define minimal complexes; the latter is a closure operation on
		$\Wide_n(\M)$ and depends on $\mathscr W$.
	\end{remark}
	
	The members of $\mathscr W$ containing a subcategory also contain
	its radical. This observation gives the closure properties.
	
	\begin{lemma}\label{lem:radical-containment}
		For every wide subcategory $\X$, we have
		$\X\subseteq\sqrt[\mathscr W]{\X}$. Moreover, for every
		$\Pp\in\mathscr W$,
		\begin{equation}\label{eq:radical-containment}
			\X\subseteq\Pp
			\quad\Longleftrightarrow\quad
			\sqrt[\mathscr W]{\X}\subseteq\Pp.
		\end{equation}
	\end{lemma}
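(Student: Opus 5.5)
The plan is to argue directly from the definition \eqref{eq:radical} of the radical as an intersection, without any further structure on $\mathscr W$.

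For the first assertion, $\X\subseteq\sqrt[\mathscr W]{\X}$: every term $\Pp$ of the intersection in \eqref{eq:radical} satisfies $\X\subseteq\Pp$ by the indexing condition. Hence $\X$ is contained in their intersection. If no member of $\mathscr W$ contains $\X$, the intersection is $\M$ by convention, and the containment is trivial.

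For \eqref{eq:radical-containment}, fix $\Pp\in\mathscr W$.
\begin{enumerate}[label=\textup{(\roman*)},leftmargin=*]
	\item Suppose $\X\subseteq\Pp$. Then $\Pp$ is one of the subcategories indexed in \eqref{eq:radical}. An intersection is contained in each of its terms, so $\sqrt[\mathscr W]{\X}\subseteq\Pp$.
	\item Suppose $\sqrt[\mathscr W]{\X}\subseteq\Pp$. The first assertion gives $\X\subseteq\sqrt[\mathscr W]{\X}$, and composing the two inclusions yields $\X\subseteq\Pp$.
\end{enumerate}

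There is no real obstacle here. The only point needing care is the empty-intersection convention in the first assertion, and it is handled by the remark that $\M$ contains every subcategory. Wideness of the radical, from Proposition~\ref{prop:intersection}, is not needed for the containments themselves.
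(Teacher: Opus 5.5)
Your proposal is correct and matches the paper's proof essentially step for step. Extensivity holds because every indexed factor contains $\X$, with the empty intersection $\M$ covered separately. The forward implication holds because $\Pp$ is itself one of the factors, and the reverse implication follows by composing with $\X\subseteq\sqrt[\mathscr W]{\X}$.
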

	
	\begin{proof}
		Every factor in \eqref{eq:radical} contains $\X$, so their
		intersection contains $\X$. This also holds for the empty intersection.
		If $\X\subseteq\Pp\in\mathscr W$, then $\Pp$ is one of those
		factors, giving $\sqrt[\mathscr W]{\X}\subseteq\Pp$.
		The reverse implication follows from
		$\X\subseteq\sqrt[\mathscr W]{\X}$.
	\end{proof}
	
	\begin{proposition}\label{prop:radical-closure}
		The operation $\sqrt[\mathscr W]{-}$ is a closure operator on
		$\Wide_n(\M)$: it is extensive, monotone, and idempotent.
	\end{proposition}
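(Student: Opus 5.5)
The plan is to verify the three properties in turn, using only Definition~\ref{def:radical}, Proposition~\ref{prop:intersection}, and Lemma~\ref{lem:radical-containment}. First, $\sqrt[\mathscr W]{-}$ is well defined as a self-map of $\Wide_n(\M)$: the intersection in \eqref{eq:radical} is wide by Proposition~\ref{prop:intersection}, with the empty intersection equal to $\M$. Extensivity, $\X\subseteq\sqrt[\mathscr W]{\X}$, is exactly the first assertion of Lemma~\ref{lem:radical-containment}.

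For monotonicity, let $\X\subseteq\mathcal Y$ be wide. Every $\Pp\in\mathscr W$ with $\mathcal Y\subseteq\Pp$ also satisfies $\X\subseteq\Pp$. So the index set defining $\sqrt[\mathscr W]{\mathcal Y}$ is contained in the index set defining $\sqrt[\mathscr W]{\X}$. An intersection over a larger family is smaller, so $\sqrt[\mathscr W]{\X}\subseteq\sqrt[\mathscr W]{\mathcal Y}$. This remains true when either family is empty, because the empty intersection is $\M$, the top element.

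For idempotence, apply the equivalence \eqref{eq:radical-containment} to $\X$: for each $\Pp\in\mathscr W$, we have $\X\subseteq\Pp$ if and only if $\sqrt[\mathscr W]{\X}\subseteq\Pp$. Hence the families $\{\Pp\in\mathscr W\mid \X\subseteq\Pp\}$ and $\{\Pp\in\mathscr W\mid \sqrt[\mathscr W]{\X}\subseteq\Pp\}$ coincide. Their intersections therefore agree, which gives $\sqrt[\mathscr W]{\sqrt[\mathscr W]{\X}}=\sqrt[\mathscr W]{\X}$. There is no real obstacle here. The only point needing care is the convention for the empty intersection, which is handled uniformly because both index families are empty simultaneously.
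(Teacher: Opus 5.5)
Your proof is correct and follows essentially the same route as the paper: extensivity from Lemma~\ref{lem:radical-containment}, monotonicity by comparing the index families, and idempotence because \eqref{eq:radical-containment} makes the two index families coincide. Your extra remarks on well-definedness via Proposition~\ref{prop:intersection} and on the empty-intersection convention are accurate and consistent with the paper's setup.
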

	
	\begin{proof}
		The extensivity follows from Lemma~\ref{lem:radical-containment}.
		If $\X\subseteq\mathcal Y$, every member of $\mathscr W$
		containing $\mathcal Y$ contains $\X$. Intersecting these families
		gives $\sqrt[\mathscr W]{\X}\subseteq\sqrt[\mathscr W]{\mathcal Y}$.
		Finally, by \eqref{eq:radical-containment}, the same members of
		\(\mathscr W\) contain \(\X\) and
		\(\sqrt[\mathscr W]{\X}\). Their defining intersections are therefore equal,
		which proves idempotence.
	\end{proof}
	
	\begin{corollary}\label{cor:radical-intersections}
		The $\mathscr W$-radical wide subcategories are exactly the
		intersections of subfamilies of $\mathscr W$, including the empty
		intersection $\M$.
	\end{corollary}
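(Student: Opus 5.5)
We prove the two inclusions separately, using only Definition~\ref{def:radical} and Proposition~\ref{prop:radical-closure}.

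First, suppose $\X$ is $\mathscr W$-radical. Then
\[
\X=\sqrt[\mathscr W]{\X}=\bigcap_{\Pp\in\mathscr W,\ \X\subseteq\Pp}\Pp ,
\]
so $\X$ is the intersection of the subfamily $\{\Pp\in\mathscr W\mid \X\subseteq\Pp\}$. If no member of $\mathscr W$ contains $\X$, this subfamily is empty and $\X=\M$, which is the empty intersection. Either way, $\X$ is an intersection of a subfamily of $\mathscr W$.

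Conversely, let $\mathscr F\subseteq\mathscr W$ be any subfamily and set $\X=\bigcap_{\Pp\in\mathscr F}\Pp$, with $\X=\M$ when $\mathscr F=\varnothing$. By Proposition~\ref{prop:intersection}, $\X$ is wide, so $\sqrt[\mathscr W]{\X}$ is defined. Extensivity (Lemma~\ref{lem:radical-containment}) gives $\X\subseteq\sqrt[\mathscr W]{\X}$. For the reverse inclusion, every $\Pp\in\mathscr F$ contains $\X$ and lies in $\mathscr W$, so it is one of the factors in \eqref{eq:radical}. Hence the defining intersection of $\sqrt[\mathscr W]{\X}$ runs over a family containing $\mathscr F$, which gives
\[
\sqrt[\mathscr W]{\X}\subseteq\bigcap_{\Pp\in\mathscr F}\Pp=\X .
\]
Thus $\X$ is $\mathscr W$-radical.

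The only point needing care is the empty case. If $\mathscr F=\varnothing$, then $\X=\M$, and $\sqrt[\mathscr W]{\M}\subseteq\M$ holds trivially, so $\M$ is $\mathscr W$-radical. This matches the convention that the empty intersection is $\M$. No step here is a genuine obstacle.
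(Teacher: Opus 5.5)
Your proposal is correct and follows the paper's argument: the radical is, by definition, an intersection of members of $\mathscr W$, and conversely every member of the subfamily is a factor in the defining intersection, so $\sqrt[\mathscr W]{\X}\subseteq\X$; combined with extensivity this gives equality. Your explicit check that $\X$ is wide (via Proposition~\ref{prop:intersection}) and your treatment of the empty family are appropriate.
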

	
	\begin{proof}
		If $\X$ is radical, equation~\eqref{eq:radical} gives the required
		expression. Conversely, suppose that
		$\X=\bigcap_{\Pp\in\mathscr V}\Pp$ for a subfamily
		$\mathscr V\subseteq\mathscr W$. Every member of $\mathscr V$
		contains $\X$, so the intersection defining its radical is contained
		in $\X$. Extensivity gives
		$\X\subseteq\sqrt[\mathscr W]{\X}\subseteq\X$.
	\end{proof}
	
	The inverse-support formula \eqref{eq:inverse-intersection} identifies this closure operator with
	the composite of the two support assignments.
	
	\begin{lemma}\label{lem:radical-support}
		For every wide subcategory $\X$,
		\begin{equation}\label{eq:radical-support-composite}
			\Supp^{-1}(\Supp(\X))=\sqrt[\mathscr W]{\X},
			\qquad
			\Supp(\sqrt[\mathscr W]{\X})=\Supp(\X).
		\end{equation}
	\end{lemma}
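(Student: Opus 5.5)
Both identities follow by combining the inverse-support formula \eqref{eq:inverse-intersection} with the subcategory-support formula \eqref{eq:subcategory-support}; no new homological input is needed.

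For the first identity, put $\omega=\Supp(\X)$ in Lemma~\ref{lem:canonical-inverse}. This gives
\[
\Supp^{-1}(\Supp(\X))=\bigcap_{\Pp\in\mathscr W,\ \Pp\notin\Supp(\X)}\Pp .
\]
By \eqref{eq:subcategory-support}, a member $\Pp\in\mathscr W$ lies outside $\Supp(\X)$ exactly when $\X\subseteq\Pp$. The index set of this intersection is therefore the index set in \eqref{eq:radical}, and the right-hand side is $\sqrt[\mathscr W]{\X}$. When no member of $\mathscr W$ contains $\X$, both sides are the empty intersection $\M$, so this case is handled by the same conventions.

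For the second identity, apply \eqref{eq:subcategory-support} to $\sqrt[\mathscr W]{\X}$, which is a subcategory of $\M$ (indeed wide by Proposition~\ref{prop:intersection}). This gives
\[
\Supp(\sqrt[\mathscr W]{\X})=\{\Pp\in\mathscr W\mid \sqrt[\mathscr W]{\X}\nsubseteq\Pp\}.
\]
By the equivalence \eqref{eq:radical-containment} of Lemma~\ref{lem:radical-containment}, for $\Pp\in\mathscr W$ we have $\sqrt[\mathscr W]{\X}\nsubseteq\Pp$ if and only if $\X\nsubseteq\Pp$. Hence this set equals $\{\Pp\in\mathscr W\mid\X\nsubseteq\Pp\}=\Supp(\X)$.

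There is no real obstacle here. The only points needing care are that all complements are taken inside $\mathscr W$, and that the empty-intersection convention matches in both formulas.
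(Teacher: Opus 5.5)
Your proposal is correct and follows essentially the same route as the paper's proof. You obtain the first identity by substituting $\omega=\Supp(\X)$ into \eqref{eq:inverse-intersection} and rewriting the index set via \eqref{eq:subcategory-support}, and the second identity from \eqref{eq:radical-containment} combined with \eqref{eq:subcategory-support}.
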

	
	\begin{proof}
		By \eqref{eq:inverse-intersection} and
		\eqref{eq:subcategory-support},
		\[
		\Supp^{-1}(\Supp(\X))
		=\bigcap_{\Pp\notin\Supp(\X)}\Pp
		=\bigcap_{\Pp\in\mathscr W,~~\X\subseteq\Pp}\Pp
		=\sqrt[\mathscr W]{\X}.
		\]
		For each $\Pp\in\mathscr W$, equation~\eqref{eq:radical-containment}
		gives $\X\nsubseteq\Pp$ if and only if
		$\sqrt[\mathscr W]{\X}\nsubseteq\Pp$. Formula
		\eqref{eq:subcategory-support} therefore gives equality of the two
		supports.
	\end{proof}
	
	Define the poset of realized supports by
	\[
	\Param_{\mathscr W}(\M)
	=\{\Supp(\X)\mid\X\in\Wide_n(\M)\},
	\]
	ordered by inclusion. Every member is specialization-closed, but
	there need not be an equality with all specialization-closed
	subsets of the spectrum.
	
	\begin{theorem}\label{thm:rad-param}
		The assignments \(\Supp\) and \(\Supp^{-1}\) induce mutually inverse
		order isomorphisms
		\[
		\Rad_{\mathscr W}(\M)
		\overset{\Supp}{\underset{\Supp^{-1}}{\rightleftarrows}}
		\Param_{\mathscr W}(\M).
		\]
	\end{theorem}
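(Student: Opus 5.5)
The plan is to deduce everything from Lemma~\ref{lem:radical-support}, Lemma~\ref{lem:canonical-inverse} and Proposition~\ref{prop:radical-closure}. Three things need checking: both maps are well defined into the stated posets, they are order preserving, and they are mutually inverse.

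\emph{Well-definedness.} For $\X\in\Rad_{\mathscr W}(\M)$, the set $\Supp(\X)$ lies in $\Param_{\mathscr W}(\M)$ by the very definition of the latter, since $\X$ is wide. Conversely, let $\omega\in\Param_{\mathscr W}(\M)$, say $\omega=\Supp(\mathcal Y)$ with $\mathcal Y\in\Wide_n(\M)$. Lemma~\ref{lem:radical-support} gives
\[
\Supp^{-1}(\omega)=\Supp^{-1}(\Supp(\mathcal Y))=\sqrt[\mathscr W]{\mathcal Y}.
\]
This is wide, and it is radical by idempotence in Proposition~\ref{prop:radical-closure}. So $\Supp^{-1}$ maps $\Param_{\mathscr W}(\M)$ into $\Rad_{\mathscr W}(\M)$.

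\emph{Monotonicity.} If $\X\subseteq\X'$, then $\Supp(\X)\subseteq\Supp(\X')$, because $\Supp(\X)$ is a union over the objects of $\X$. If $\omega\subseteq\omega'$, then clearly $\Supp^{-1}(\omega)\subseteq\Supp^{-1}(\omega')$.

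\emph{Mutual inverses.} For radical $\X$, Lemma~\ref{lem:radical-support} gives $\Supp^{-1}(\Supp(\X))=\sqrt[\mathscr W]{\X}=\X$. For $\omega=\Supp(\mathcal Y)$ in $\Param_{\mathscr W}(\M)$, the computation above together with the second identity of Lemma~\ref{lem:radical-support} gives
\[
\Supp(\Supp^{-1}(\omega))=\Supp(\sqrt[\mathscr W]{\mathcal Y})=\Supp(\mathcal Y)=\omega.
\]
Mutually inverse monotone maps are order isomorphisms, which proves the theorem.

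I expect no real obstacle. The only subtle point is that $\omega\in\Param_{\mathscr W}(\M)$ must be written as the support of some wide $\mathcal Y$, which need not be radical. The argument then passes through $\sqrt[\mathscr W]{\mathcal Y}$ rather than assuming $\omega$ arises from a radical subcategory directly.
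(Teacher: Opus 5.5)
Your proof is correct and follows the paper's argument: you use Lemma~\ref{lem:radical-support} to get $\Supp^{-1}(\Supp(\X))=\X$ for radical $\X$. For $\omega=\Supp(\mathcal Y)$ you pass through $\sqrt[\mathscr W]{\mathcal Y}$, getting radicality from idempotence in Proposition~\ref{prop:radical-closure} and $\Supp(\Supp^{-1}(\omega))=\omega$ from the second identity of the lemma, exactly as the paper does (you are only more explicit about well-definedness and why mutually inverse monotone maps are order isomorphisms).
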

	
	\begin{proof}
		For a radical $\X$, \eqref{eq:radical-support-composite} gives
		$\Supp^{-1}(\Supp(\X))=\X$. For
		$\omega=\Supp(\mathcal Y)$, Proposition~\ref{prop:radical-closure}
		and Lemma~\ref{lem:radical-support} show that
		$\Supp^{-1}(\omega)=\sqrt[\mathscr W]{\mathcal Y}$ is radical
		and has support $\omega$. Both assignments preserve inclusion
		by their definitions.
	\end{proof}
	
	\begin{remark}\label{rem:galois}
		For a wide subcategory $\X$ and any subset $\omega$ of the spectrum,
		\[
		\Supp(\X)\subseteq\omega
		\quad\Longleftrightarrow\quad
		\X\subseteq\Supp^{-1}(\omega).
		\]
		Thus $\Supp\dashv\Supp^{-1}$ is an adjunction of posets, with
		closure operator $\Supp^{-1}\circ\Supp=\sqrt[\mathscr W]{-}$.
		Theorem~\ref{thm:rad-param} describes its fixed points.
	\end{remark}
	
	\section{Prime wide subcategories and reconstruction}\label{sec:prime}
	
	We prove prime separation and reconstruct classifying support
	spaces from the prime spectrum.
	From now on, $\Supp$ denotes the canonical support on
	$\Spec_{\Prime_n(\M)}(\M)$.
	
	\begin{definition}\label{def:prime}
		A proper wide subcategory $\Pp\subsetneq\M$ is \emph{prime} if 
		\[
		\{\X\in\Wide_n(\M)\mid \Pp\subsetneq\X\}
		\]
		has a least element.  We write $\Prime_n(\M)$ for the set of prime wide subcategories.
	\end{definition}
	
	\begin{remark}
		We follow the least-element convention in
		\cite[Remark~2.9]{M23}, since a unique minimal element
		need not be a least element in an infinite poset.
		This notion of primeness need not coincide with
		meet-primality in the lattice.
	\end{remark}
	
	Since $\Wide_n(\M)$ is complete, for every proper $\Pp$ we may define
	\[
	\Pp^{\dagger}
	=\bigcap_{\Pp\subsetneq\X}\X.
	\]
	The indexing family is nonempty because $\Pp\subsetneq\M$.
	
	\begin{lemma}\label{lem:prime-dagger}
		A proper wide subcategory $\Pp$ is prime if and only if $\Pp\subsetneq\Pp^{\dagger}$.
		In this case $\Pp^{\dagger}$ is the least wide subcategory properly containing $\Pp$.
	\end{lemma}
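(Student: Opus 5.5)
The plan is to work directly with the intersection \(\Pp^{\dagger}\), which is wide by Proposition~\ref{prop:intersection}. The indexing family \(\{\X\in\Wide_n(\M)\mid \Pp\subsetneq\X\}\) is nonempty since it contains \(\M\). Each member contains \(\Pp\), so \(\Pp\subseteq\Pp^{\dagger}\) always holds. Moreover, \(\Pp^{\dagger}\) is contained in every wide subcategory strictly containing \(\Pp\). The lemma then reduces to whether the inclusion \(\Pp\subseteq\Pp^{\dagger}\) is strict.

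For the forward direction, suppose \(\Pp\) is prime, and let \(\mathcal L\) be the least element of the set of wide subcategories strictly containing \(\Pp\). Then \(\mathcal L\) is one of the factors in the intersection, so \(\Pp^{\dagger}\subseteq\mathcal L\). Conversely, \(\mathcal L\subseteq\X\) for every factor \(\X\), so \(\mathcal L\subseteq\Pp^{\dagger}\). Hence \(\Pp^{\dagger}=\mathcal L\), and \(\mathcal L\) strictly contains \(\Pp\), which gives \(\Pp\subsetneq\Pp^{\dagger}\).

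For the converse, suppose \(\Pp\subsetneq\Pp^{\dagger}\). Since \(\Pp^{\dagger}\) is wide, it belongs to the set \(\{\X\in\Wide_n(\M)\mid\Pp\subsetneq\X\}\). It is contained in every member of that set by construction, so it is the least element. Thus \(\Pp\) is prime, and the final assertion follows from either direction, since the least element, when it exists, equals \(\Pp^{\dagger}\).

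There is no real obstacle here. The only point needing care is that \(\Pp^{\dagger}\) is itself wide, so that it is eligible to be the least element. This is exactly what Proposition~\ref{prop:intersection}, and hence the Krull--Schmidt minimal-representative machinery, supplies.
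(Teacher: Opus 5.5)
Your proof is correct and follows the paper's argument. In the forward direction you identify the least element with \(\Pp^{\dagger}\), and in the converse you observe that \(\Pp^{\dagger}\) is contained in every wide subcategory properly containing \(\Pp\); you also make explicit that \(\Pp^{\dagger}\) is wide by Proposition~\ref{prop:intersection}, which the paper uses only implicitly through completeness of \(\Wide_n(\M)\).
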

	
	\begin{proof}
		If $\Pp^+$ is the least wide subcategory properly containing $\Pp$, then $\Pp^+$ is contained in every wide subcategory properly containing $\Pp$ and also belongs to this family, hence $\Pp^{\dagger}=\Pp^+$.  Conversely, if $\Pp\subsetneq\Pp^{\dagger}$, then by construction $\Pp^{\dagger}$ is contained in every wide subcategory properly containing $\Pp$.
	\end{proof}
	
	Recall that an element \(p\) of a complete lattice \(L\) is
	\emph{completely meet-irreducible} if, whenever \(p\) is the meet of a
	family of elements of \(L\), that family contains \(p\). The empty meet
	is allowed. Consequently, the top element is not completely
	meet-irreducible.
	
	\begin{proposition}\label{prop:prime-cmi}
		A wide subcategory $\Pp$ is prime if and only if it is completely meet-irreducible in $\Wide_n(\M)$.
	\end{proposition}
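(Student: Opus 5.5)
We prove the two implications separately, using Lemma~\ref{lem:prime-dagger} and the completeness of $\Wide_n(\M)$ from Corollary~\ref{cor:complete-lattice}. The guiding idea is that primeness of $\Pp$ says exactly that $\Pp^{\dagger}$ differs from $\Pp$. Complete meet-irreducibility says that $\Pp$ is not the meet of the wide subcategories strictly above it.

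\emph{Prime implies completely meet-irreducible.} Let $\Pp$ be prime and suppose $\Pp=\bigwedge_{\lambda}\W_\lambda=\bigcap_\lambda\W_\lambda$ for some family $\{\W_\lambda\}_{\lambda\in\Lambda}\subseteq\Wide_n(\M)$. If $\Lambda=\varnothing$, then $\Pp=\M$, contradicting properness. Otherwise each $\W_\lambda$ contains $\Pp$. Suppose, for a contradiction, that no $\W_\lambda$ equals $\Pp$. Then every $\W_\lambda$ properly contains $\Pp$. By Lemma~\ref{lem:prime-dagger}, each $\W_\lambda$ therefore contains $\Pp^{\dagger}$, so
\[
\Pp=\bigcap_\lambda\W_\lambda\supseteq\Pp^{\dagger}\supsetneq\Pp,
\]
a contradiction. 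Hence the family contains $\Pp$.

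\emph{Completely meet-irreducible implies prime.} Let $\Pp$ be completely meet-irreducible. Since the empty meet is $\M$, the subcategory $\Pp$ is not the top element, so $\Pp\subsetneq\M$ is proper and $\Pp^{\dagger}$ is defined. By definition, $\Pp^{\dagger}$ is the meet of the family $\{\X\in\Wide_n(\M)\mid\Pp\subsetneq\X\}$. This family is nonempty and does not contain $\Pp$. If $\Pp^{\dagger}=\Pp$, complete meet-irreducibility would force $\Pp$ to belong to this family, which is impossible. Since $\Pp\subseteq\Pp^{\dagger}$ always holds, we conclude $\Pp\subsetneq\Pp^{\dagger}$, and Lemma~\ref{lem:prime-dagger} shows that $\Pp$ is prime.

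The argument is essentially formal. The only point needing care is the treatment of the empty meet, which is what guarantees properness in the second direction and rules out $\Lambda=\varnothing$ in the first. The fact that meets in $\Wide_n(\M)$ are computed as intersections is exactly Proposition~\ref{prop:intersection}; this is where the Krull--Schmidt hypothesis enters.
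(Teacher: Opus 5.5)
Your proof is correct and follows the paper's argument essentially verbatim. In the forward direction, any family avoiding $\Pp$ would consist of subcategories containing $\Pp^{\dagger}\supsetneq\Pp$; the converse applies complete meet-irreducibility to the family defining $\Pp^{\dagger}$. Your separate handling of the empty family is harmless, since the paper's argument already covers that case through the convention that the empty intersection is $\M$.
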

	
	\begin{proof}
		Suppose $\Pp$ is prime, and let $\Pp^+$ be the least wide subcategory properly containing $\Pp$.  If
		$\Pp=\bigcap_{i\in I}\X_i$
		and no $\X_i$ equals $\Pp$, then every $\X_i$ properly contains $\Pp$, so $\Pp^+\subseteq\X_i$ for all $i$.  Hence $\Pp^+\subseteq\bigcap_i\X_i=\Pp$, a contradiction.
		
		Conversely, suppose $\Pp$ is completely meet-irreducible.  It is not the top element.  If $\Pp^{\dagger}=\Pp$, then $\Pp$ is the meet of the family of wide subcategories properly containing it.  Complete meet-irreducibility would force one member of that family to equal $\Pp$, a contradiction.  Thus $\Pp\subsetneq\Pp^{\dagger}$ and Lemma~\ref{lem:prime-dagger} applies.
	\end{proof}
	
	Directed unions provide enough prime wide subcategories to separate wide subcategories
	from objects. This gives a stronger conclusion for the full family
	of prime wide subcategories than for an arbitrary chosen family $\mathscr W$.
	
	\begin{proposition}\label{prop:prime-separation}
		Let $\X$ be a wide subcategory of $\M$ and let $M\notin\X$.
		There is a prime wide subcategory $\Pp$ with
		$\X\subseteq\Pp$ and $M\notin\Pp$. Consequently,
		\begin{equation}\label{eq:prime-intersection}
			\X=\bigcap_{\substack{\Pp\in\Prime_n(\M)\\\X\subseteq\Pp}}\Pp,
			\qquad
			\Rad_{\Prime_n(\M)}(\M)=\Wide_n(\M).
		\end{equation}
	\end{proposition}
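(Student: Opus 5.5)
We will use Zorn's lemma on the set
\[
\mathscr F=\{\mathcal Y\in\Wide_n(\M)\mid \X\subseteq\mathcal Y,\ M\notin\mathcal Y\},
\]
ordered by inclusion. It is a set because $\M$ is essentially small, and it is nonempty because $\X\in\mathscr F$. Let $\{\mathcal Y_i\}_{i\in I}$ be a nonempty chain in $\mathscr F$. A chain is directed, so Lemma~\ref{lem:directed-unions} shows that $\bigcup_i\mathcal Y_i$ is wide. It contains $\X$. It does not contain $M$: an object lies in the union if and only if it lies in some $\mathcal Y_i$, and every subcategory here is closed under isomorphisms. Hence every chain has an upper bound in $\mathscr F$, and Zorn's lemma gives a maximal element $\Pp\in\mathscr F$.

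Next we show that $\Pp$ is prime. It is proper because $M\notin\Pp$. Set $\Pp^+=\wide_{\M}(\Pp\cup\{M\})$, which exists by Corollary~\ref{cor:complete-lattice}. It properly contains $\Pp$, since $M\in\Pp^+$. Now let $\mathcal Y$ be any wide subcategory with $\Pp\subsetneq\mathcal Y$. Then $\mathcal Y$ contains $\X$, and maximality of $\Pp$ in $\mathscr F$ forces $\mathcal Y\notin\mathscr F$, so $M\in\mathcal Y$. Thus $\mathcal Y$ is a wide subcategory containing $\Pp\cup\{M\}$, and therefore $\Pp^+\subseteq\mathcal Y$. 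So $\Pp^+$ is the least element of $\{\mathcal Y\mid\Pp\subsetneq\mathcal Y\}$, and $\Pp$ is prime by Definition~\ref{def:prime}. It satisfies $\X\subseteq\Pp$ and $M\notin\Pp$, as required.

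For \eqref{eq:prime-intersection}, the intersection on the right contains $\X$, since each factor does. This also holds when the family is empty, as the empty intersection is $\M$. Conversely, if $M\notin\X$, the first part gives a prime $\Pp\supseteq\X$ with $M\notin\Pp$, so $M$ is not in the intersection. Hence the two sides are equal. The right side of \eqref{eq:prime-intersection} is exactly $\sqrt[\Prime_n(\M)]{\X}$ by Definition~\ref{def:radical}, so every wide subcategory is $\Prime_n(\M)$-radical. The reverse inclusion $\Rad_{\Prime_n(\M)}(\M)\subseteq\Wide_n(\M)$ holds by definition.

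The main obstacle is the chain step, which is handled by Lemma~\ref{lem:directed-unions}. In the $n$-abelian setting, closure under directed unions is precisely what allows maximal elements to exist. Once it is available, the least-element property follows from maximality alone.
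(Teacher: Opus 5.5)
Your proof is correct and takes essentially the same route as the paper: you apply Zorn's lemma to the wide subcategories containing $\X$ and avoiding $M$, using Lemma~\ref{lem:directed-unions} for chains, and you show primality by exhibiting $\wide_{\M}(\Pp\cup\{M\})$ as the least proper enlargement of the maximal element. You also handle the empty chain (via $\X\in\mathscr F$) and the case $\X=\M$ (via the empty-intersection convention) exactly as the paper does.
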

	
	\begin{proof}
		The collection
		\[
		\mathsf S=\{\W\in\Wide_n(\M)\mid
		\X\subseteq\W,\ M\notin\W\}
		\]
		is a set because $\M$ is essentially small, and it contains $\X$.
		Every nonempty chain in $\mathsf S$ has its union as an upper
		bound in $\mathsf S$, by Lemma~\ref{lem:directed-unions}.
		The empty chain has the upper bound $\X$. By Zorn's lemma, there exists a maximal element \(\Pp\) of
		\(\mathsf S\).
		Every wide subcategory strictly containing $\Pp$ must contain $M$.
		Hence
		\[
		\Pp^+=\wide_{\M}(\Ob\Pp\cup\{M\})
		\]
		is the least wide subcategory strictly containing $\Pp$.
		Thus $\Pp$ is prime and has the required separation property.
		Separating each object outside $\X$ proves the first equality in
		\eqref{eq:prime-intersection}. For $\X=\M$, this equality uses the
		empty-intersection convention. The second equality in
		\eqref{eq:prime-intersection} then follows from Definition~\ref{def:radical}.
	\end{proof}
	
	In particular, the canonical prime support separates all wide
	subcategories. It gives a classification by specialization-closed
	subsets precisely when every such subset is realized as the support
	of a wide subcategory. To obtain a classifying support datum in the
	sense of Definition~\ref{def:classifying}, the prime spectrum must
	also be noetherian and sober.
	
	We next give the corresponding topological description. Let $X$ be a topological space.  Write $\Spcl(X)$ for the complete lattice of specialization-closed subsets.  Define
	\[
	\Spcl_1(X)
	=\left\{V\in\Spcl(X)\ \middle|\ 
	\{T\in\Spcl(X)\mid V\subsetneq T\}
	\text{ has a least element}\right\}.
	\]
	For $x\in X$, put
	$V_x=\{y\in X\mid x\notin\overline{\{y\}}\}.$

	\begin{lemma}\label{lem:spcl1-points}
		If $X$ is $T_0$, the map
		\[
		X\longrightarrow\Spcl_1(X),
		\qquad
		x\longmapsto V_x,
		\]
		is a bijection. The least specialization-closed subset properly containing $V_x$ is $V_x\cup\{x\}$.
	\end{lemma}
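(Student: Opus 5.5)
We will work throughout with the specialization order \(y\le_{\mathrm{sp}}x\) iff \(y\in\overline{\{x\}}\). Under this order, the specialization-closed subsets of \(X\) are exactly the down-sets, since \(\overline{\{x\}}\subseteq V\) for every \(x\in V\). Because \(X\) is \(T_0\), \(\le_{\mathrm{sp}}\) is a partial order. The whole statement then reduces to an order-theoretic fact about down-sets of a poset. In that language \(V_x=\{y\mid x\not\le_{\mathrm{sp}} y\}\), which is the complement of the principal up-set \(\uparrow x\).

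First we check that the map is well defined. The set \(V_x\) is a down-set: if \(y\in V_x\) and \(z\le y\), then \(x\le z\) would give \(x\le y\), which is impossible. It is proper, since \(x\notin V_x\).

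Next, \(V_x\cup\{x\}\) is a down-set. Indeed, if \(z\le x\) with \(z\ne x\), then \(x\not\le z\) by antisymmetry, so \(z\in V_x\). Now let \(T\) be any down-set with \(V_x\subsetneq T\), and pick \(t\in T\setminus V_x\). Then \(x\le t\), so \(x\in T\) because \(T\) is a down-set. Hence \(V_x\cup\{x\}\subseteq T\). This shows that \(V_x\cup\{x\}\) is the least element of the family of specialization-closed sets properly containing \(V_x\), and so \(V_x\in\Spcl_1(X)\).

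For injectivity, suppose \(V_x=V_y\). Then the least strict enlargements coincide, so \(V_x\cup\{x\}=V_y\cup\{y\}\). Since \(x\notin V_x=V_y\), this forces \(x=y\). Alternatively, \(\uparrow x=\uparrow y\) gives \(x\le y\le x\), and \(T_0\) again yields \(x=y\).

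The main step is surjectivity. Let \(V\in\Spcl_1(X)\) with least strict enlargement \(T\).
\begin{enumerate}
\item Since \(V\) is proper (it has a strict enlargement), pick \(x\notin V\). The set \(V\cup\overline{\{x\}}\) is a down-set strictly containing \(V\), so \(T\subseteq V\cup\overline{\{x\}}\).
\item Fix \(t\in T\setminus V\). For any \(x\notin V\), step~1 gives \(t\in\overline{\{x\}}\), that is, \(t\le x\).
\item Taking \(x=t\) shows \(t\le x\) for all \(x\notin V\). If \(t'\) is another element of \(T\setminus V\), applying this in both directions gives \(t\le t'\le t\), so \(t=t'\). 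Hence \(T\setminus V=\{t\}\).
\item We claim \(V=V_t\).
\begin{itemize}
\item If \(y\notin V\), then \(t\le y\) by step~2, so \(y\notin V_t\). This gives \(V_t\subseteq V\).
\item If \(y\in V\) and \(t\le y\), then \(t\in V\) because \(V\) is a down-set. This contradicts \(t\notin V\). So every \(y\in V\) satisfies \(t\not\le y\), giving \(V\subseteq V_t\).
\end{itemize}
\end{enumerate}
Thus \(V=V_t\), and the map is surjective.

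The obstacle I expect is locating the point \(t\). The key idea is that every \(x\notin V\) yields the enlargement \(V\cup\overline{\{x\}}\), so the least enlargement must lie inside all of them. This forces a unique minimum of \(X\setminus V\).
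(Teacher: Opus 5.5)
Your proof is correct and follows essentially the same route as the paper. You use the same check that $V_x\cup\{x\}$ is the least strict specialization-closed enlargement of $V_x$, and for surjectivity the same key move of enlarging $V$ by $\overline{\{y\}}$ for a point $y\notin V$; you phrase this directly as ``$t$ is the minimum of $X\setminus V$'' where the paper argues by contradiction, and your injectivity argument via $V_x\cup\{x\}=V_y\cup\{y\}$ is a harmless variant of the paper's. Step~3 is not needed for the conclusion, and its opening phrase ``Taking $x=t$'' should read ``By step~2''.
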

	
	\begin{proof}
		First $V_x$ is specialization-closed.  Indeed, if $y\in V_x$ and $z\in\overline{\{y\}}$, then $x\in\overline{\{z\}}$ would imply $x\in\overline{\{y\}}$ by transitivity of specialization, a contradiction.
		
		We next show that $V_x\cup\{x\}$ is specialization-closed.  If $z\in\overline{\{x\}}$ and $z\notin V_x$, then $x\in\overline{\{z\}}$.  Since $X$ is $T_0$, mutual specialization forces $z=x$.  Now let $T$ be specialization-closed with $V_x\subsetneq T$.  Choose $y\in T\setminus V_x$.  Then $x\in\overline{\{y\}}\subseteq T$, hence $V_x\cup\{x\}\subseteq T$.  Therefore $V_x\in\Spcl_1(X)$.
		
		Conversely, let $V\in\Spcl_1(X)$ and let $T$ be the least specialization-closed subset properly containing $V$.  Choose $x\in T\setminus V$.  If $y\in V$, then $x\notin\overline{\{y\}}$, since otherwise specialization-closedness of $V$ would imply $x\in V$.  Hence $V\subseteq V_x$.
		
		Suppose that some $y\in V_x\setminus V$.  Then
		$V\cup\overline{\{y\}}$
		is a specialization-closed subset properly containing $V$.  Since $T$ is the least such subset, $T\subseteq V\cup\overline{\{y\}}$.
		Since $x\in T\setminus V$, it follows that $x\in\overline{\{y\}}$, contradicting $y\in V_x$.  Hence $V=V_x$.
		
		Finally, if $V_x=V_y$, then $x\notin V_y$ and $y\notin V_x$, so $y\in\overline{\{x\}}$ and $x\in\overline{\{y\}}$.  The $T_0$ property gives $x=y$.
	\end{proof}
	
	By the same complete-lattice argument as in Lemma~\ref{lem:prime-dagger} and Proposition~\ref{prop:prime-cmi}, the elements of $\Spcl_1(X)$ are precisely the completely meet-irreducible elements of the complete lattice $\Spcl(X)$.
	
	We apply the point correspondence in Lemma~\ref{lem:spcl1-points}
	to support data that classify wide subcategories by specialization-closed
	subsets.
	
	\begin{definition}\label{def:classifying}
		An $n$-support datum $(X,\sigma)$ on $\M$ is \emph{classifying} if:
		\begin{enumerate}[label=\textup{(CNSD\arabic*)},leftmargin=*]
			\item $X$ is noetherian and sober; 
			\item the assignments
			\[
			\sigma(\W)=\bigcup_{M\in\W}\sigma(M),
			\qquad
			\sigma^{-1}(V)=\{M\in\M\mid\sigma(M)\subseteq V\}
			\]
			induce mutually inverse lattice isomorphisms
			\begin{equation}\label{eq:classifying-lattice}
				\Wide_n(\M)
				\overset{\sigma}{\underset{\sigma^{-1}}{\rightleftarrows}}
				\Spcl(X).
			\end{equation}
		\end{enumerate}
	\end{definition}
	
	A sober space is $T_0$, so Lemma~\ref{lem:spcl1-points} applies.  Since lattice isomorphisms preserve completely meet-irreducible elements, Proposition~\ref{prop:prime-cmi} and Lemma~\ref{lem:spcl1-points} already yield a canonical bijection
	\[
	X\longrightarrow\Prime_n(\M).
	\]
	The next theorem gives the objectwise description of this bijection
	and proves that it is a homeomorphism.
	
	\begin{theorem}\label{thm:reconstruction}
		Let $(X,\sigma)$ be a classifying $n$-support datum on $\M$.
		Define
		\begin{equation}\label{eq:phi}
			\phi:X\longrightarrow\Spec_{\Prime_n(\M)}(\M),
			\qquad \phi(x)=\sigma^{-1}(V_x).
		\end{equation}
		Then $\phi$ is a homeomorphism. Moreover,
		\begin{equation}\label{eq:phi-object}
			\phi(x)=\{M\in\M\mid x\notin\sigma(M)\}.
		\end{equation}
		For every $M\in\M$, this homeomorphism satisfies
		$\phi^{-1}(\Supp(M))=\sigma(M)$.
	\end{theorem}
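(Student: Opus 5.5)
First compute $\phi(x)$ objectwise. By definition $M\in\sigma^{-1}(V_x)$ if and only if $\sigma(M)\subseteq V_x$. Suppose $x\in\sigma(M)$; since $\sigma(M)$ is closed and $x\in\overline{\{x\}}$, we have $x\notin V_x$, so $\sigma(M)\nsubseteq V_x$. Conversely suppose $x\notin\sigma(M)$ and $y\in\sigma(M)$. Then $\overline{\{y\}}\subseteq\sigma(M)$ because $\sigma(M)$ is closed, so $x\notin\overline{\{y\}}$, that is, $y\in V_x$. This proves \eqref{eq:phi-object}, and this step uses only that each $\sigma(M)$ is closed. The identity $\phi^{-1}(\Supp(M))=\sigma(M)$ then follows at once: $\phi(x)\in\Supp(M)$ if and only if $M\notin\phi(x)$, if and only if $x\in\sigma(M)$.

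Next I will show that $\phi$ is a bijection onto $\Prime_n(\M)$. Since $\sigma^{-1}\colon\Spcl(X)\to\Wide_n(\M)$ is an order isomorphism by (CNSD2), it preserves arbitrary meets and so preserves completely meet-irreducible elements. By the remark after Lemma~\ref{lem:spcl1-points}, these elements in $\Spcl(X)$ form exactly $\Spcl_1(X)=\{V_x\}$. By Proposition~\ref{prop:prime-cmi}, the corresponding elements of $\Wide_n(\M)$ are exactly the primes. A sober space is $T_0$, so Lemma~\ref{lem:spcl1-points} shows that $x\mapsto V_x$ is a bijection $X\to\Spcl_1(X)$. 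Composing, $\phi$ is a bijection $X\to\Prime_n(\M)$.

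For continuity, the closed sets of $\Spec_{\Prime_n(\M)}(\M)$ are intersections of the sets $\Supp(M)$. Since $\phi^{-1}(\Supp(M))=\sigma(M)$ is closed, $\phi$ is continuous.

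For closedness I need every closed $Z\subseteq X$ to have the form $\phi^{-1}$ of a closed set, i.e.\ $Z=\bigcap_{M:\,Z\subseteq\sigma(M)}\sigma(M)$. This is the main obstacle, and it is where noetherian and sober enter. In a noetherian space every closed set is a finite union of irreducible closed sets, and sobriety makes each of these $\overline{\{z\}}$. So it suffices to show each $\overline{\{z\}}$ equals some $\sigma(M)$, since finite unions are then handled by \ref{NSD2}. The subset $\overline{\{z\}}$ is specialization-closed, so $\W=\sigma^{-1}(\overline{\{z\}})$ is wide with $\sigma(\W)=\overline{\{z\}}$. Thus $z\in\sigma(M)$ for some $M\in\W$. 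For that $M$ we get $\overline{\{z\}}\subseteq\sigma(M)\subseteq\overline{\{z\}}$, hence $\overline{\{z\}}=\sigma(M)$.

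Therefore every closed subset of $X$ is a finite union of sets $\sigma(M)$, which by \ref{NSD2} is a single $\sigma(N)=\phi^{-1}(\Supp(N))$. Since $\phi$ is bijective, $\phi(Z)=\Supp(N)$ is closed. So $\phi$ is a continuous closed bijection, hence a homeomorphism.
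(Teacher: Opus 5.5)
Your proof is correct and follows essentially the same route as the paper: the objectwise formula from closedness of $\sigma(M)$, bijectivity via completely meet-irreducible elements and Lemma~\ref{lem:spcl1-points}, continuity from $\phi^{-1}(\Supp(M))=\sigma(M)$, and closedness by realizing each $\overline{\{z\}}$ as some $\sigma(M)$ through the noetherian--sober decomposition. The only details the paper spells out that you leave implicit are the proof that closed sets in a noetherian space have finite irreducible decompositions, and the empty closed set, which is $\sigma(0)$ by \ref{NSD1}.
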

	
	\begin{proof}
		We first identify the points. Sobriety implies that $X$ is $T_0$,
		so Lemma~\ref{lem:spcl1-points} gives a bijection
		$x\mapsto V_x$ from $X$ to $\Spcl_1(X)$.
		Since the lattice isomorphism preserves arbitrary meets, it preserves completely meet-irreducible elements. Thus it
		identifies $\Spcl_1(X)$ with the completely meet-irreducible
		elements of $\Wide_n(\M)$, which are the prime wide subcategories
		by Proposition~\ref{prop:prime-cmi}. This proves that $\phi$ is
		well defined and bijective.
		
		For each $M\in\M$, closedness of $\sigma(M)$ gives
		\[
		\sigma(M)\subseteq V_x
		\quad\Longleftrightarrow\quad x\notin\sigma(M).
		\]
		Indeed, $x\notin V_x$, which proves the forward implication.
		For the reverse implication, if $x\notin\sigma(M)$ and
		$y\in\sigma(M)$, then
		$\overline{\{y\}}\subseteq\sigma(M)$. Hence
		$x\notin\overline{\{y\}}$, so $y\in V_x$.
		This establishes \eqref{eq:phi-object} and therefore
		\begin{equation}\label{eq:phi-preimage-support}
			\phi^{-1}(\Supp(M))=\sigma(M).
		\end{equation}
		Every closed subset of the prime spectrum is an intersection of
		object supports, by \eqref{eq:closed-intersection-support}.
		Its inverse image under $\phi$ is consequently an intersection
		of closed subsets of $X$. Thus $\phi$ is continuous.
		
		It remains to prove that $\phi$ is closed. We show first that
		every closed subset of $X$ is the support of one object.
		Let $C\subseteq X$ be nonempty and closed.
		Noetherianity means that descending chains of closed subsets
		stabilize. It follows that $C$ is a finite union of nonempty
		irreducible closed subsets: otherwise choose a minimal closed
		subset $F$ that has no such decomposition. The set $F$ cannot
		be irreducible, so it is the union of two proper closed subsets
		of $F$, which are also closed in $X$. Each has a finite
		irreducible decomposition by minimality, giving one for $F$, a
		contradiction. Write the resulting decomposition as
		$C=C_1\cup\cdots\cup C_r$.
		Since $X$ is sober, each $C_i$ has a unique generic point
		$x_i$, that is, $C_i=\overline{\{x_i\}}$. Hence
		$C=\overline{\{x_1\}}\cup\cdots\cup\overline{\{x_r\}}$.

		For each $i$, the point closure $\overline{\{x_i\}}$ is
		specialization-closed. The two assignments in
		\eqref{eq:classifying-lattice} are inverse, so
		\[
		\overline{\{x_i\}}
		=\bigcup_{M\in\sigma^{-1}(\overline{\{x_i\}})}\sigma(M).
		\]
		As $x_i$ belongs to this union, there is an object $M_i$ with
		$x_i\in\sigma(M_i)\subseteq\overline{\{x_i\}}$.
		
		The set $\sigma(M_i)$ is closed and contains $x_i$, so it also
		contains $\overline{\{x_i\}}$. Therefore
		$\sigma(M_i)=\overline{\{x_i\}}$.
		The direct-sum axiom yields
		$C=\sigma(M_1\oplus\cdots\oplus M_r)$. For $C=\varnothing$, the same assertion holds with the zero
		object, by $\sigma(0)=\varnothing$.
		
		Thus, for every closed $C\subseteq X$, there is an object
		$M_C$ with $C=\sigma(M_C)$. Equation
		\eqref{eq:phi-preimage-support} and surjectivity of $\phi$ give 
		$\phi(C)
		=\phi\bigl(\phi^{-1}(\Supp(M_C))\bigr)
		=\Supp(M_C)$, 
		which is closed in the prime spectrum. Hence $\phi$ is a closed
		continuous bijection, and therefore a homeomorphism.
	\end{proof}
	
	\begin{remark}\label{rem:weaker-topology}
		The conclusion of Theorem~\ref{thm:reconstruction} remains valid if the
		topological assumptions in Definition~\ref{def:classifying} are replaced
		by the following weaker conditions: \(X\) is \(T_0\), and every nonempty
		closed subset of \(X\) is a finite union of closures of points. All other
		assumptions, including the \(n\)-support axioms and the two inverse maps
		in \eqref{eq:classifying-lattice}, remain unchanged.
		
		The \(T_0\) condition yields the bijection in
		Lemma~\ref{lem:spcl1-points}. The finite-union condition implies that
		every closed subset is realized as the support of an object, as in the
		proof of Theorem~\ref{thm:reconstruction}. Equation
		\eqref{eq:phi-preimage-support} then shows that \(\phi\) is closed.
	\end{remark}
	
	\section{A \texorpdfstring{$2$}{2}-abelian example with non-classifying prime support}\label{sec:example}
	
	We conclude with a concrete example in which all wide subcategories, prime wide subcategories, supports, and radicals can be computed directly from a bound quiver. The algebra and its higher wide subcategories occur in \cite[Section~7]{HJV20} and \cite[Section~7.1]{F26}. We include the module calculations to make the subsequent spectrum and support computations explicit.
	
	Let $\K$ be a field and
	\begin{equation}\label{eq:example-algebra}
		\Lambda
		=\K Q/\langle\beta\alpha\rangle,
		\qquad
		Q:\quad
		1\xrightarrow{\alpha}2\xrightarrow{\beta}3.
	\end{equation}
	We compose paths from right to left, so $\beta\alpha$ means first
	$\alpha$ and then $\beta$. We work with finite-dimensional left
	$\Lambda$-modules, identified with representations of $Q$ in its
	displayed orientation; thus $P_i=\Lambda e_i$. Put
	\[
	A=S_1,
	\qquad
	B=P_1,
	\qquad
	C=P_2,
	\qquad
	D=P_3=S_3,
	\]
	and
	\begin{equation}\label{eq:example-M}
		\M=\add\{A,B,C,D\}.
	\end{equation}
	As representations of $Q$ these are
	\[
	A=(\K\to0\to0),\quad
	B=(\K\xrightarrow{1}\K\to0),\quad
	C=(0\to\K\xrightarrow{1}\K),\quad
	D=(0\to0\to\K).
	\]
	Put $E=S_2=(0\to\K\to0)$. To determine all indecomposable
	modules, take a representation
	$V=(V_1\xrightarrow a V_2\xrightarrow b V_3)$ with $ba=0$.
	The inclusion $\operatorname{im}a\subseteq\ker b$ allows us to
	choose vector-space decompositions
	\[
	V_1=\ker a\oplus U_1,
	\qquad
	V_2=\operatorname{im}a\oplus U_2\oplus W_2,
	\qquad
	V_3=\operatorname{im}b\oplus W_3,
	\]
	where $\ker b=\operatorname{im}a\oplus U_2$.
	The restrictions $a|_{U_1}:U_1\to\operatorname{im}a$ and
	$b|_{W_2}:W_2\to\operatorname{im}b$ are isomorphisms.
	Consequently, $V$ is the direct sum of the five representations
	\[
	(U_1\xrightarrow{\sim}\operatorname{im}a\to0),~
	(0\to W_2\xrightarrow{\sim}\operatorname{im}b),~
	(\ker a\to0\to0),~
	(0\to U_2\to0),~
	(0\to0\to W_3).
	\]
	Writing $r=\operatorname{rank}a$ and $s=\operatorname{rank}b$
	and choosing bases in these summands gives
	\begin{equation}\label{eq:representation-decomposition}
		V\cong B^{\oplus r}\oplus C^{\oplus s}\oplus A^{\oplus(\dim V_1-r)}
		\oplus E^{\oplus(\dim V_2-r-s)}\oplus D^{\oplus(\dim V_3-s)}.
	\end{equation}
	Each of $A,B,C,D,E$ has endomorphism ring $\K$, so these
	five modules are indecomposable. Their dimension vectors are
	distinct, and \eqref{eq:representation-decomposition} proves
	that there are no other indecomposable modules.
	The projective indecomposables are $B,C,D$, and the injective
	indecomposables are $A,B,C$. This cluster-tilting example also
	appears in \cite[Example~2.4]{AHJKPT25}; we give the calculations
	needed here.
	
	Computing maps vertex by vertex gives the following Hom-spaces,
	with sources indexing the rows and targets indexing the columns:
	\begin{equation}\label{eq:hom-table}
		\begin{array}{c|cccc}
			\Hom_\Lambda(-,-)&A&B&C&D\\\hline
			A&\K&0&0&0\\
			B&\K&\K&0&0\\
			C&0&\K&\K&0\\
			D&0&0&\K&\K
		\end{array}
	\end{equation}
	Let $u:D\to C$, $v:C\to B$, and $w:B\to A$ be the maps that
	are identities at vertices $3$, $2$, and $1$, respectively, and
	zero at the other vertices. They give the projective resolution
	\begin{equation}\label{eq:main-2exact}
		0\longrightarrow D\xrightarrow{u}C\xrightarrow{v}B
		\xrightarrow{w}A\longrightarrow0.
	\end{equation}
	The three nonzero maps between distinct indecomposables of $\M$
	are, up to scalar,
	$$D\xrightarrow{u}C\xrightarrow{v}B\xrightarrow{w}A, \qquad vu=wv=0.$$

	They are irreducible in $\M$. The map $v$ factors through $E$
	in $\modcat\Lambda$, so the ambient category matters in this assertion.
	
	We next verify that $\M$ has the required higher-homological structure.
	Recall that a functorially finite subcategory $\mathcal T$ of
	$\modcat\Lambda$ is $2$-cluster tilting if
	\[
	\mathcal T
	=\{Z\mid\Ext^1_\Lambda(Z,T)=0\text{ for all }T\in\mathcal T\}
	=\{Z\mid\Ext^1_\Lambda(T,Z)=0\text{ for all }T\in\mathcal T\}.
	\]
	Functorial finiteness means that every module admits a right and a left
	\(\mathcal T\)-approximation. Every morphism from or to an object of
	\(\mathcal T\) factors through the corresponding approximation.
	
	\begin{proposition}\label{prop:example-cluster}
		The subcategory $\M$ is $2$-cluster tilting in $\modcat\Lambda$.
		Consequently, it is $2$-abelian.
	\end{proposition}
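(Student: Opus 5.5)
To prove that $\M$ is $2$-cluster tilting, we check three things: functorial finiteness, the two $\Ext^1$-orthogonality equalities, and then we invoke Jasso's theorem. Functorial finiteness is automatic here. The category $\modcat\Lambda$ has only finitely many indecomposables, namely $A,B,C,D,E$ by \eqref{eq:representation-decomposition}. Every Hom-space is finite-dimensional, so $\M=\add\{A,B,C,D\}$ is an additively finite subcategory. For any module $Z$, a right $\M$-approximation is obtained by taking a basis of $\Hom_\Lambda(T,Z)$ for each $T\in\{A,B,C,D\}$ and summing the resulting maps. Left approximations are obtained dually.

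Next we compute $\Ext^1_\Lambda$ between all pairs of the five indecomposables, using minimal projective resolutions:
\[
0\to C\xrightarrow{v}B\xrightarrow{w}A\to0,\qquad 0\to D\xrightarrow{u}C\to E\to0,
\]
while $B,C,D$ are projective. Applying $\Hom_\Lambda(-,Z)$ and using the table \eqref{eq:hom-table}, extended by $\Hom(C,E)=\K$, $\Hom(B,E)=0$, $\Hom(D,E)=0$ and so on, we get the following.
\begin{itemize}
\item $\Ext^1(A,Z)$ is the cokernel of $\Hom(B,Z)\to\Hom(C,Z)$. This is $\K$ for $Z=D$ and for $Z=E$, and $0$ for $Z=A,B,C$. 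For $Z=C$ the map $\Hom(B,C)=0\to\Hom(C,C)$ would give $\K$; but $A$ has projective dimension $2$, so the cokernel really sits in $\Ext^1$ via the truncated resolution $C\to B\to A$ with kernel $D$. I will redo this carefully, since the resolution of $A$ is \eqref{eq:main-2exact}: $0\to D\to C\to B\to A\to 0$.
\item With the correct resolution, $\Ext^1(A,Z)=H^1$ of $\Hom(B,Z)\to\Hom(C,Z)\to\Hom(D,Z)$. This vanishes for $Z\in\{A,B,C,D\}$, and $\Ext^1(A,E)\neq0$ because $\Hom(C,E)=\K$ maps to $\Hom(D,E)=0$ while $\Hom(B,E)=0$.
\item $\Ext^1(E,Z)$ is the cokernel of $\Hom(C,Z)\to\Hom(D,Z)$, which is nonzero for $Z=D$.
\end{itemize}
Since $B,C,D$ are projective and $A,B,C$ are injective, this yields $\Ext^1(\M,\M)=0$. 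The element $E$ satisfies $\Ext^1(E,D)\neq0$ and $\Ext^1(A,E)\neq0$ with $D,A\in\M$. Consequently both perpendicular categories equal $\add\{A,B,C,D\}=\M$, because every module is a direct sum of the five indecomposables and $\Ext^1$ is additive.

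Finally, Jasso's theorem \cite[Theorem~3.16]{J16} states that an $n$-cluster tilting subcategory of an abelian category is $n$-abelian. With $n=2$, this gives that $\M$ is $2$-abelian. The only step needing care is the $\Ext$ bookkeeping, in particular using the full length-$2$ resolution \eqref{eq:main-2exact} of $A$ rather than a short one. Alternatively, one can get $\Ext^1(A,E)\neq0$ directly from the nonsplit sequence $0\to E\to B\oplus? \to A$, namely $0\to E\to(\K\to\K\to0)\to A\to0$, whose middle term is $B$; this is the check I would use to confirm the table.
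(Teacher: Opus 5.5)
Once you delete the opening ``resolution'' $0\to C\xrightarrow{v}B\xrightarrow{w}A\to0$ (it is not exact, since $\ker v\cong D$) and the first bullet built on it, your argument is the paper's proof. That is: Hom-basis approximations for functorial finiteness, $\Ext^1_\Lambda(A,\M)=0$ from the length-two resolution \eqref{eq:main-2exact} together with projectivity of $B,C,D$, exclusion of $E$ via $\Ext^1_\Lambda(A,E)\neq0$ and $\Ext^1_\Lambda(E,D)\neq0$, and then \cite[Theorem~3.16]{J16}. The one step you assert without checking is the vanishing of $H^1$ for $Z=B,C$; it holds because $v^*(1_B)=v\neq0$ and $u^*(1_C)=u\neq0$, so the relevant one-dimensional differentials are isomorphisms.
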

	
	\begin{proof}
		The category $\modcat\Lambda$ is Hom-finite and $\M$ is the additive closure of
		finitely many objects. Taking evaluation maps associated with
		bases of their Hom-spaces gives right and left $\M$-approximations.
		Thus $\M$ is functorially finite. It contains all projectives
		and all injectives, so it is also generating and cogenerating.
		
		Applying $\Hom_\Lambda(-,Y)$ to the projective resolution
		\eqref{eq:main-2exact} gives the cochain complex
		\[
		\Hom_\Lambda(B,Y)\xrightarrow{v^*}\Hom_\Lambda(C,Y)
		\xrightarrow{u^*}\Hom_\Lambda(D,Y)
		\]
		in degrees $0,1,2$. By \eqref{eq:hom-table}, these complexes
		are as follows:
		\begin{equation}\label{eq:ext-complexes}
			\begin{array}{c|c}
				Y&\Hom_\Lambda(B,Y)\longrightarrow\Hom_\Lambda(C,Y)
				\longrightarrow\Hom_\Lambda(D,Y)\\\hline
				A&\K\longrightarrow0\longrightarrow0\\
				B&\K\xrightarrow{\sim}\K\longrightarrow0\\
				C&0\longrightarrow\K\xrightarrow{\sim}\K\\
				D&0\longrightarrow0\longrightarrow\K
			\end{array}
		\end{equation}
		The nonzero differentials are isomorphisms: precomposition sends
		$1_B$ to $v$ in the row $Y=B$, and $1_C$ to $u$ in the row
		$Y=C$. Thus
		\[
		\Ext^1_\Lambda(A,Y)=\ker u^*/\operatorname{im}v^*=0
		\qquad(Y=A,B,C,D).
		\]
		For $Y=A,D$, the degree-one term is zero; for $Y=B$, the
		degree-zero differential is surjective; for $Y=C$, the
		degree-one differential is injective. The other twelve groups
		$\Ext^1_\Lambda(X,Y)$, with $X\in\{B,C,D\}$ and
		$Y\in\{A,B,C,D\}$, vanish because $B,C,D$ are projective.
		Additivity in both variables therefore gives $\Ext^1_\Lambda(\M,\M)=0$.

		It remains to exclude $E$ from both Ext-orthogonal subcategories.
		Maps of representations give
		\[
		\Hom_\Lambda(B,E)=0,
		\qquad \Hom_\Lambda(C,E)=\K,
		\qquad \Hom_\Lambda(D,E)=0.
		\]
		Applying $\Hom_\Lambda(-,E)$ to \eqref{eq:main-2exact}
		therefore gives
		$\Ext^1_\Lambda(A,E)\cong\K$.
		The projective resolution
		\[
		0\longrightarrow D\xrightarrow{u}C\longrightarrow E
		\longrightarrow0
		\]
		and $\Hom_\Lambda(C,D)=0$ give
		\[
		\Ext^1_\Lambda(E,D)
		\cong\operatorname{coker}\bigl(
		\Hom_\Lambda(C,D)\longrightarrow\Hom_\Lambda(D,D)\bigr)
		\cong\K.
		\]
		By \eqref{eq:representation-decomposition}, every module
		$Z\notin\M$ has $E$ as a direct summand. Additivity of
		$\Ext^1$ then gives both $\Ext^1_\Lambda(A,Z)\neq0$ and
		$\Ext^1_\Lambda(Z,D)\neq0$. Since $A,D\in\M$, neither
		Ext-orthogonal subcategory contains such a $Z$. Together with
		$\Ext^1_\Lambda(\M,\M)=0$, this proves the two required
		equalities.
		Since $\M$ is a $2$-cluster tilting subcategory of the abelian
		category $\modcat\Lambda$, it is $2$-abelian by
		\cite[Theorem~3.16]{J16}.
	\end{proof}
	
	For a morphism $f:X\to Y$ in $\M$, a complex
	$K_2\to K_1\to X$ in $\M$ is a $2$-kernel of $f$ if and only if
	\[
	0\longrightarrow K_2\longrightarrow K_1\longrightarrow X
	\xrightarrow{f}Y
	\]
	is exact at $K_2,K_1,X$ in $\modcat\Lambda$. Dually, a complex
	$Y\to C_1\to C_2$ in $\M$ is a $2$-cokernel of $f$ if and only if
	\[
	X\xrightarrow{f}Y\longrightarrow C_1\longrightarrow C_2
	\longrightarrow0
	\]
	is exact at $Y,C_1,C_2$. A four-term complex in $\M$ is $2$-exact
	if and only if adjoining a zero at each end gives an exact sequence
	in $\modcat\Lambda$; see \cite[Proposition~2.9(1)--(3)]{HJ21}.
	Thus \eqref{eq:main-2exact}, with its external zeros omitted, is
	$2$-exact. The minimal $2$-kernels and $2$-cokernels of $u,v,w$ are
	\begin{equation}\label{eq:minimal-table}
		\begin{array}{c|c|c}
			f&\text{minimal $2$-kernel}&\text{minimal $2$-cokernel}\\\hline
			u:D\to C&0\to0\to D&C\xrightarrow{v}B\xrightarrow{w}A\\
			v:C\to B&0\to D\xrightarrow{u}C&B\xrightarrow{w}A\to0\\
			w:B\to A&D\xrightarrow{u}C\xrightarrow{v}B&A\to0\to0
		\end{array}
	\end{equation}
	The maps between the two kernel terms or the two cokernel terms
	are radical, as required by Definition~\ref{def:minimal}.
	
	The degree-two cohomology in \eqref{eq:ext-complexes} also gives
	\begin{equation}\label{eq:ext2-example}
		\Ext^2_\Lambda(X,Y)\cong
		\begin{cases}
			\K,&(X,Y)=(A,D),\\
			0,&\text{otherwise},
		\end{cases}
		\qquad X,Y\in\{A,B,C,D\}.
	\end{equation}
	The class of \eqref{eq:main-2exact} corresponds to the image of
	$1_D$ in
	\[
	\Hom_\Lambda(D,D)/
	\operatorname{im}\bigl(\Hom_\Lambda(C,D)\to\Hom_\Lambda(D,D)\bigr)
	=\K.
	\]
	It is therefore nonzero and generates $\Ext^2_\Lambda(A,D)$.
	Its middle map $v$ is radical, so the sequence is a minimal
	$2$-extension.
	
	The minimal complexes and extension groups determine the
	wide-subcategory lattice.
	
	\begin{proposition}\label{prop:example-wide-classification}
		The wide subcategories of $\M$ are exactly
		\begin{equation}\label{eq:8wide}
			\mathbf0,
			\quad
			\add A,\ \add B,\ \add C,\ \add D,
			\quad
			\add(A\oplus C),\ \add(B\oplus D),
			\quad
			\M.
		\end{equation}
	\end{proposition}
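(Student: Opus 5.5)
Since $\M$ is Krull--Schmidt with indecomposables $A,B,C,D$, every subcategory closed under finite direct sums and summands is $\add\mathcal S$ for a subset $\mathcal S\subseteq\{A,B,C,D\}$, giving sixteen candidates. I will test each candidate with the criterion of Lemma~\ref{lem:minimal-characterization}. That criterion asks for closure under the terms of minimal $2$-kernels and $2$-cokernels of morphisms inside the candidate, and under the middle terms of minimal $2$-extensions between its objects. The input is the list of nonzero maps $D\xrightarrow{u}C\xrightarrow{v}B\xrightarrow{w}A$, table \eqref{eq:minimal-table}, and the Ext computation \eqref{eq:ext2-example}.

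First I reduce the morphism condition to generators. A morphism $f:M\to N$ in $\add\mathcal S$ is a matrix of scalar multiples of identities and of $u,v,w$ between summands lying in $\mathcal S$. I will bring $f$ to a normal form by row and column operations, i.e.\ automorphisms of source and target. The aim is a direct sum of identities, zero maps, and copies of $u$, $v$, $w$ (plus composites, which vanish). Is the normal form reachable? The endomorphism rings are $\K$ on indecomposables and all relevant composites vanish, so the Hom-structure is that of a representation of a linearly oriented $A_4$ with radical square zero. Gaussian elimination along each arrow separately should therefore work. Granting this, the minimal $2$-kernel and $2$-cokernel of $f$ are direct sums of those of the summands: minimal kernels and cokernels of identities and zero maps are trivial or the end objects themselves, and the rest come from \eqref{eq:minimal-table}. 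Their terms then lie in $\add$ of the terms listed for $u,v,w$ and of $M,N$.

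From \eqref{eq:minimal-table} I read off the constraints. If $C,D\in\mathcal S$ (so $u$ is present), then $B,A\in\mathcal S$. If $B,C\in\mathcal S$, then $A,D\in\mathcal S$. If $A,B\in\mathcal S$, then $C,D\in\mathcal S$. For extensions, \eqref{eq:ext2-example} shows that the only nonsplit $2$-extensions between objects of $\M$ have end terms involving $A$ and $D$, with class in $\Ext^2(A^{\oplus p},D^{\oplus q})$. Split extensions have minimal representative with zero middle terms, since the middle map must be radical. A nonsplit class is a matrix of scalars; after base change it becomes a sum of copies of \eqref{eq:main-2exact} and split pieces. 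Hence if $A,D\in\mathcal S$ then $B,C\in\mathcal S$, and otherwise the extension condition is vacuous.

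Now I enumerate. Singletons and $\mathbf0$ pass, and so does $\M$. Among pairs, $\{C,D\}$, $\{B,C\}$, $\{A,B\}$ and $\{A,D\}$ fail by the four rules, while $\{A,C\}$ and $\{B,D\}$ have no nonzero maps between distinct summands and no nonsplit extensions, so they pass. Each triple contains one of the forbidden adjacent pairs or $\{A,D\}$ without being everything, so it fails. This yields exactly the eight subcategories in \eqref{eq:8wide}. The main obstacle is making the normal-form reduction rigorous: minimal representatives are only unique up to isomorphism, and minimal complexes of direct sums are direct sums of minimal complexes. I would justify the latter from Lemma~\ref{lem:additive-minimal-representatives}(3), since a direct sum of minimal complexes is again minimal, the radical being closed under direct sums.
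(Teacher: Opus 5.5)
This is essentially the paper's argument. It uses the same four forcing rules, read off from \eqref{eq:minimal-table} and the minimal $2$-extension \eqref{eq:main-2exact}, then checks the eight survivors and enumerates the sixteen subsets. The only difference is that you check the survivors through Lemma~\ref{lem:minimal-characterization}, whereas the paper checks \ref{W1} and \ref{W2} directly.

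One assertion is false, though harmless. The minimal representative of a split $2$-extension of $Y$ by $X$ does not have zero middle terms: $X\to0\to0\to Y$ is $2$-exact only when $X=Y=0$. The minimal representative is $X\xrightarrow{1}X\xrightarrow{0}Y\xrightarrow{1}Y$. The radical condition concerns only the middle map, which is already $0$, and maps touching the endpoints need not be radical. Its middle terms $X,Y$ lie in the candidate anyway, so split classes still impose no condition.

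The global normal form you call the main obstacle is never actually needed. Necessity uses only $u,v,w$ and \eqref{eq:main-2exact}. The survivors have no nonzero maps between distinct indecomposables, so rank normal form in each block suffices, as in the paper.

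Finally, passing from \eqref{eq:ext2-example} to a statement about all $2$-extensions in $\M$ requires the identification of $2$-extension classes in $\M$ with $\Ext^2_\Lambda$ \cite[Proposition~2.9(4)]{HJ21}. You should state that step explicitly.
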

	
	\begin{proof}
		Let $\W$ be wide. By Lemma~\ref{lem:minimal-characterization},
		the minimal $2$-kernel of $w:B\to A$ in
		\eqref{eq:minimal-table} gives
		\[
		A,B\in\W\quad\Longrightarrow\quad C,D\in\W.
		\]
		The minimal $2$-kernel and $2$-cokernel of $v:C\to B$ give
		\[
		B,C\in\W\quad\Longrightarrow\quad A,D\in\W,
		\]
		and the minimal $2$-cokernel of $u:D\to C$ gives
		\[
		C,D\in\W\quad\Longrightarrow\quad A,B\in\W.
		\]
		Finally, if $A,D\in\W$, the minimal $2$-extension
		\eqref{eq:main-2exact} gives $B,C\in\W$ by
		Lemma~\ref{lem:minimal-characterization}. Thus any wide
		subcategory containing one of the pairs
		\begin{equation}\label{eq:example-forcing-pairs}
			\{A,B\},\quad \{B,C\},\quad \{C,D\},\quad \{A,D\}
		\end{equation}
		is all of $\M$.
		
		To verify the remaining candidates, first consider either
		$\W=\add(A\oplus C)$ or $\W=\add(B\oplus D)$.
		Write $(S,T)=(A,C)$ in the first case and $(S,T)=(B,D)$
		in the second. In both cases, \eqref{eq:hom-table} gives
		\[
		\Hom_\Lambda(S,T)=\Hom_\Lambda(T,S)=0,
		\qquad
		\End_\Lambda(S)=\End_\Lambda(T)=\K.
		\]
		Every morphism
		\[
		f:S^{\oplus a}\oplus T^{\oplus b}
		\longrightarrow S^{\oplus c}\oplus T^{\oplus d}
		\]
		is therefore a block diagonal matrix with one $c\times a$
		matrix and one $d\times b$ matrix over $\K$. Independent
		changes of basis in these two blocks put each matrix into its
		rank normal form. Consequently, there are decompositions of
		its source and target as $I\oplus X_0$ and $I\oplus Y_0$,
		with $I,X_0,Y_0\in\W$, for which
		\[
		f=\begin{pmatrix}1_I&0\\0&0\end{pmatrix}:
		I\oplus X_0\longrightarrow I\oplus Y_0.
		\]
		For this form of $f$, a $2$-kernel and a $2$-cokernel in $\M$
		are, respectively,
		\[
		0\longrightarrow X_0\xrightarrow{\binom{0}{1}}
		I\oplus X_0,
		\qquad
		I\oplus Y_0\xrightarrow{(0\;1)}Y_0\longrightarrow0.
		\]
		Indeed, applying $\Hom_\Lambda(Z,-)$ or
		$\Hom_\Lambda(-,Z)$ for any $Z\in\M$ gives the required
		exact sequences, since the displayed inclusion and projection
		split and $f$ is the identity on $I$. Transporting these
		complexes along the chosen source and target isomorphisms
		proves condition~\ref{W1} for every morphism in each of the
		two subcategories.
		
		For $X,Y\in\add(A\oplus C)$, every summand of
		$\Ext^2_\Lambda(Y,X)$ obtained by additivity in both variables
		is one of
		\[
		\Ext^2_\Lambda(A,A),\quad \Ext^2_\Lambda(A,C),\quad
		\Ext^2_\Lambda(C,A),\quad \Ext^2_\Lambda(C,C),
		\]
		and all four vanish by \eqref{eq:ext2-example}.
		For $X,Y\in\add(B\oplus D)$, the module $Y$ is projective,
		so $\Ext^2_\Lambda(Y,X)=0$ as well.
		By \cite[Proposition~2.9(4)]{HJ21}, equivalence classes of
		$2$-extensions of $Y$ by $X$ in $\M$ are identified with
		$\Ext^2_\Lambda(Y,X)$. Thus every such extension with endpoints
		in either of the two subcategories is equivalent to
		\[
		0\longrightarrow X\xrightarrow{1_X}X\xrightarrow{0}Y
		\xrightarrow{1_Y}Y\longrightarrow0.
		\]
		The middle terms lie in the same subcategory as the endpoints,
		which proves condition~\ref{W2}. Hence both
		$\add(A\oplus C)$ and $\add(B\oplus D)$ are wide.
		
		For $\add Z$ with $Z\in\{A,B,C,D\}$, the same verification
		uses a single matrix block and the equality
		$\Ext^2_\Lambda(Z,Z)=0$. The zero subcategory satisfies the
		two conditions using zero complexes, and $\M$ itself is wide.
		Finally, closure under direct summands and
		\eqref{eq:representation-decomposition} show that every wide
		subcategory of $\M$ is determined by the subset of
		$\{A,B,C,D\}$ it contains. Every proper subset with at least
		three elements contains one of the pairs in
		\eqref{eq:example-forcing-pairs}. Among the two-element subsets,
		only $\{A,C\}$ and $\{B,D\}$ avoid these pairs. This exhausts
		all sixteen subsets and proves \eqref{eq:8wide}.
	\end{proof}
	
	The resulting lattice is shown in Figure~\ref{fig:wide-lattice-example}.
	\begin{figure}[htbp]
		\centering
		\begin{tikzpicture}[>=latex, every node/.style={font=\small}]
			\node (M) at (0,3.6) {$\M$};
			\node (AC) at (-2.1,2.4) {$\add(A\oplus C)$};
			\node (BD) at (2.1,2.4) {$\add(B\oplus D)$};
			\node (A) at (-3.0,1.2) {$\add A$};
			\node (C) at (-1.2,1.2) {$\add C$};
			\node (B) at (1.2,1.2) {$\add B$};
			\node (D) at (3.0,1.2) {$\add D$};
			\node (Z) at (0,0) {$\mathbf0$};
			\draw[->] (A)--(AC); \draw[->] (C)--(AC);
			\draw[->] (B)--(BD); \draw[->] (D)--(BD);
			\draw[->] (AC)--(M); \draw[->] (BD)--(M);
			\draw[->] (Z)--(A); \draw[->] (Z)--(B); \draw[->] (Z)--(C); \draw[->] (Z)--(D);
		\end{tikzpicture}
		\caption{The lattice $\Wide_2(\M)$.}
		\label{fig:wide-lattice-example}
	\end{figure}
	
	We next compute the prime spectrum, supports, and radicals. Since the lattice is finite, a proper wide subcategory is a prime wide subcategory exactly when it has a unique upper cover.  Therefore
	\begin{equation}\label{eq:example-primes}
		\Prime_2(\M)
		=\{\add A,\add B,\add C,\add D,
		\add(A\oplus C),\add(B\oplus D)\}.
	\end{equation}
	Write
	\[
	p_A=\add A,\quad p_B=\add B,\quad p_C=\add C,\quad p_D=\add D,
	\]
	\[
	q_{AC}=\add(A\oplus C),
	\qquad
	q_{BD}=\add(B\oplus D).
	\]
	Then
	\[
	\overline{\{q_{AC}\}}=\{p_A,p_C,q_{AC}\},
	\qquad
	\overline{\{q_{BD}\}}=\{p_B,p_D,q_{BD}\},
	\]
	while each singleton $\{p_\bullet\}$ is closed. Thus the prime spectrum is the disjoint union of two three-point $V$-shaped finite $T_0$ spaces.
	
	The supports of the four indecomposables are
	\[
	\begin{gathered}
		\Supp(A)=\{p_B,p_C,p_D,q_{BD}\},\qquad
		\Supp(B)=\{p_A,p_C,p_D,q_{AC}\},\\
		\Supp(C)=\{p_A,p_B,p_D,q_{BD}\},\qquad
		\Supp(D)=\{p_A,p_B,p_C,q_{AC}\}.
	\end{gathered}
	\]
	Furthermore,
	\[
	\Supp(\add(A\oplus C))
	=\Prime_2(\M)\setminus\{q_{AC}\},
	\]
	\[
	\Supp(\add(B\oplus D))
	=\Prime_2(\M)\setminus\{q_{BD}\},
	\]
	and
	\[
	\Supp(\mathbf0)=\varnothing,
	\qquad
	\Supp(\M)=\Prime_2(\M).
	\]
	These eight supports are pairwise distinct.

	\begin{proposition}\label{prop:example-radical}
		Every wide subcategory of $\M$ is $\mathfrak P$-radical, where $\mathfrak P=\Prime_2(\M)$.
	\end{proposition}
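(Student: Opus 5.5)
This is the specialization of Proposition~\ref{prop:prime-separation} to the present example: that result gives $\Rad_{\Prime_n(\M)}(\M)=\Wide_n(\M)$ for every essentially small Krull--Schmidt $n$-abelian category. The category $\M=\add\{A,B,C,D\}$ is $2$-abelian by Proposition~\ref{prop:example-cluster}. It is essentially small, since its objects are finite-dimensional modules. It is Krull--Schmidt, because each object decomposes into finitely many of $A,B,C,D$ by \eqref{eq:representation-decomposition}, and these have endomorphism ring $\K$. So the general result applies directly, and the plan is to record this and then confirm it by hand.

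For the explicit check, I use Corollary~\ref{cor:radical-intersections}: a wide subcategory is $\mathfrak P$-radical exactly when it is an intersection of primes, with $\M$ the empty intersection. I go through the eight subcategories of \eqref{eq:8wide}, using the primes listed in \eqref{eq:example-primes}.
\begin{enumerate}[label=\textup{(\roman*)},leftmargin=*]
\item The six primes are intersections of themselves.
\item $\M$ is the empty intersection.
\item $\mathbf0=p_A\cap p_B$, since $\add A\cap\add B=\mathbf0$: closure under summands together with the Krull--Schmidt decomposition determines each subcategory by its indecomposables.
\end{enumerate}

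Alternatively, compute $\sqrt[\mathfrak P]{\X}$ from \eqref{eq:radical} for each $\X$ and compare.
\begin{enumerate}[label=\textup{(\roman*)},leftmargin=*]
\item For $\X=p_A$, the primes containing it are $p_A$ and $q_{AC}$, and their intersection is $p_A$.
\item For $\X=\mathbf0$, all six primes contain it, and their intersection is $\mathbf0$.
\end{enumerate}
The remaining cases are similar. There is no real obstacle. The only points needing care are the empty-intersection convention for $\M$, and the fact that intersections of $\add$-closures of disjoint sets of indecomposables are $\mathbf0$.
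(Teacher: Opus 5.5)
Your proposal is correct and follows essentially the same approach as the paper: you invoke Proposition~\ref{prop:prime-separation}, then confirm explicitly that the six nonzero proper wide subcategories are primes, that $\mathbf0$ is an intersection of primes, and that $\M$ is the empty intersection. The only difference is that you write $\mathbf0=p_A\cap p_B$ where the paper intersects all six primes, and this does not matter.
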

	
	\begin{proof}
		This follows from Proposition~\ref{prop:prime-separation}.
		Explicitly, each of the six nonzero proper wide subcategories is
		itself a prime wide subcategory, the intersection of all six
		prime wide subcategories is $\mathbf0$,
		and the radical of $\M$ is the empty intersection $\M$.
	\end{proof}
	
	The prime spectrum is a finite $T_0$-space, hence noetherian and sober.
	Nevertheless, its canonical support is not classifying: the support
	assignments fail to realize all specialization-closed subsets as supports
	of wide subcategories. Indeed, a component with closed points $p,p'$
	and generic point $q$ has precisely the specialization-closed subsets
	\[
	\varnothing,\quad \{p\},\quad \{p'\},\quad \{p,p'\},
	\quad \{p,p',q\}.
	\]
	The choices on the two components are independent, so
	\[
	\left|\Spcl\!\left(\Spec_{\Prime_2(\M)}(\M)\right)\right|=25,
	\]
	whereas Proposition~\ref{prop:example-wide-classification} gives
	\[
	|\Wide_2(\M)|=8.
	\]
	For a concrete unrealized closed subset, take $\{p_A\}$.
	Since $p_B$ and $p_C$ occur among the other prime wide subcategories
	and $p_B\cap p_C=\mathbf0$, \eqref{eq:inverse-intersection}
	gives
	\[
	\Supp^{-1}(\{p_A\})=\mathbf0,
	\qquad \Supp(\Supp^{-1}(\{p_A\}))=\varnothing\neq\{p_A\}.
	\]
	Thus $\{p_A\}$ is not the support of any wide subcategory.
	
	\begin{corollary}\label{cor:no-classifying-example}
		The $2$-abelian category $\M$ in \eqref{eq:example-M} admits no classifying $2$-support datum in the sense of Definition~\ref{def:classifying}.
	\end{corollary}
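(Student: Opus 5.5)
We argue by contradiction, applying Theorem~\ref{thm:reconstruction} to a hypothetical classifying datum. Suppose $(X,\sigma)$ is a classifying $2$-support datum on $\M$. By Definition~\ref{def:classifying}, the maps $\sigma$ and $\sigma^{-1}$ give mutually inverse lattice isomorphisms between $\Wide_2(\M)$ and $\Spcl(X)$. Proposition~\ref{prop:example-wide-classification} shows that $\Wide_2(\M)$ has exactly eight elements, so we obtain
\[
|\Spcl(X)|=|\Wide_2(\M)|=8.
\]

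Next, Theorem~\ref{thm:reconstruction} supplies a homeomorphism
\[
\phi\colon X\longrightarrow\Spec_{\Prime_2(\M)}(\M).
\]
A homeomorphism preserves the specialization order, and specialization-closed subsets are defined purely in terms of that order. Hence $\phi$ induces a bijection
\[
\Spcl(X)\cong\Spcl\bigl(\Spec_{\Prime_2(\M)}(\M)\bigr).
\]
The computation preceding the corollary shows that the right-hand side has $25$ elements. This gives $8=25$, a contradiction, so no classifying $2$-support datum exists.

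The only point needing care is the count of $25$. By Proposition~\ref{prop:point-closure}, the specialization order on the prime spectrum is inclusion of prime wide subcategories. By \eqref{eq:example-primes} and Figure~\ref{fig:wide-lattice-example}, this poset is a disjoint union of two components. Each component consists of one generic point lying above two incomparable closed points. A down-closed subset of one such component is one of
\[
\varnothing,\quad \{p\},\quad \{p'\},\quad \{p,p'\},\quad \{p,p',q\},
\]
giving five choices per component. The choices on the two components are independent, so there are $5\cdot5=25$ specialization-closed subsets in total.

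As a cross-check, the argument does not really need the full reconstruction theorem. The finite $T_0$ space already fails to be classified by its canonical support: the closed subset $\{p_A\}$ is not the support of any wide subcategory, as computed above.
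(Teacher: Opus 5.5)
Your argument is correct and is the paper's own proof: a classifying datum $(X,\sigma)$ would give $|\Spcl(X)|=|\Wide_2(\M)|=8$, while Theorem~\ref{thm:reconstruction} makes $X$ homeomorphic to the prime spectrum, whose $25$ specialization-closed subsets you count correctly (your added justification of that count is a welcome detail the paper leaves implicit). Your closing ``cross-check'' does not, however, replace the reconstruction theorem: showing that the canonical support on $\Spec_{\Prime_2(\M)}(\M)$ is not classifying says nothing by itself about an arbitrary datum $(X,\sigma)$, and Theorem~\ref{thm:reconstruction} is exactly what transfers the question to the prime spectrum (an argument that genuinely avoids it is that $\Wide_2(\M)$ is non-distributive, as in the paper's final remark, whereas $\Spcl(X)$ is always distributive).
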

	
	\begin{proof}
		If $(Y,\sigma)$ were classifying, Theorem~\ref{thm:reconstruction} would give a homeomorphism
		\[
		Y\cong\Spec_{\Prime_2(\M)}(\M).
		\]
		Hence $|\Spcl(Y)|=25$.  But the classifying property would also give a lattice isomorphism
		\[
		\Wide_2(\M)\cong\Spcl(Y),
		\]
		contradicting $|\Wide_2(\M)|=8$.
	\end{proof}
	
	\begin{remark}
		Proposition~\ref{prop:prime-separation} shows that prime-radicality
		holds for every wide subcategory under our general hypotheses.
		The obstruction in this example is the failure to realize all
		specialization-closed subsets as supports of wide subcategories. It is also visible directly
		in the lattice: if $\mathcal A=\add A$, $\mathcal B=\add B$,
		and $\mathcal C=\add C$, then
		\[
		\mathcal A\wedge(\mathcal B\vee\mathcal C)=\mathcal A,
		\qquad
		(\mathcal A\wedge\mathcal B)\vee(\mathcal A\wedge\mathcal C)=\mathbf0.
		\]
		Thus $\Wide_2(\M)$ is not distributive, whereas the lattice of
		specialization-closed subsets of any space is distributive.
	\end{remark}

\end{document}